\renewcommand{\baselinestretch}{1.6}
\def\singlespace{\def\baselinestretch{1}\@normalsize}
\renewcommand{\baselinestretch}{1.36}
\newtheorem{thm}{Theorem}[section]
\newtheorem{lem}{Lemma}[section]
\newtheorem{definition}{Definition}[section]
\newtheorem{rem}{Remark}[section]
\newtheorem{assumption}{Assumption}
\def\E{{\rm E}}
\def \tr{\rm tr}
\def \and{\rm and}
\def \RCV{\rm RCV}
\def \TVA{\rm TVA}
\newcommand{\bu}{\mbox{\bf u}}
\newcommand{\bz}{\mbox{\bf z}}
\newcommand{\bI}{\mbox{\bf I}}
\newcommand{\bM}{\mbox{\bf M}}
\newcommand{\bS}{\mbox{\bf S}}
\newcommand{\bU}{\mbox{\bf U}}
\newcommand{\bV}{\mbox{\bf V}}
\newcommand{\bW}{\mbox{\bf W}}
\newcommand{\bX}{\mbox{\bf X}}
\newcommand{\bSig}{\mbox{\boldmath $\Sigma$}}
\newcommand{\1}{\bm{1}}
\newcommand{\cS}{\mathcal{S}}
\renewcommand{\(}{\left(}
\renewcommand{\)}{\right)}
\newcommand{\De}{\Delta}
\newcommand{\ga}{\gamma}
\newcommand{\bLa}{\pmb\Lambda}
\def\eqd{\,{\buildrel d \over =}\,}
\begin{document}

\title{{\Large Shrinkage Estimation of Covariance Matrix for Portfolio
Choice with High Frequency Data}}
\author{ \textsc{Cheng Liu, Ningning Xia and Jun Yu} \thanks{%
Liu is an assistant professor in Economics and Management School of Wuhan
University, Hubei, China. Email: chengliu\_eco@whu.edu.cn. Xia is an
assistant professor in School of Statistics and Management, Shanghai
University of Finance and Economics. Email: xia.ningning@mail.shufe.edu.cn.
Yu is a professor in School of Economics and Lee Kong Chian School of
Business, Singapore Management University. Email: yujun@smu.edu.sg.} }
\maketitle

\begin{abstract}
This paper examines the usefulness of high frequency data in estimating the
covariance matrix for portfolio choice when the portfolio size is large. A
computationally convenient nonlinear shrinkage estimator for the integrated
covariance (ICV) matrix of financial assets is developed in two steps. The
eigenvectors of the ICV are first constructed from a designed time variation
adjusted realized covariance matrix of noise-free log-returns of relatively
low frequency data. Then the regularized eigenvalues of the ICV are
estimated by quasi-maximum likelihood based on high frequency data. The
estimator is always positive definite and its inverse is the estimator of
the inverse of ICV. It minimizes the limit of the out-of-sample variance of
portfolio returns within the class of rotation-equivalent estimators. It
works when the number of underlying assets is larger than the number of time
series observations in each asset and when the asset price follows a general
stochastic process. Our theoretical results are derived under the assumption
that the number of assets ($p$) and the sample size ($n$) satisfy $%
p/n\rightarrow y>0$ as $n\rightarrow \infty $. The advantages of our
proposed estimator are demonstrated using real data.
\end{abstract}

\noindent {\small \textit{Some key words}: Portfolio Choice, High Frequency
Data; Integrated Covariance Matrix; Shrinkage Function.}

\noindent {\small \textit{JEL classification}: C13; C22; C51; G12; G14 }

\newpage

\section{Introduction}

\label{introduction}The portfolio choice problem has been an important topic
in modern financial economics ever since the pioneer contribution by
Markowitz (1952). It is well-known in the literature that constructing an
optimal portfolio requires a good estimate for the second moment of the
future return distribution, i.e., the covariance matrix of the future
returns. The simplest situation for estimating the covariance matrix is when
the returns are independent and identically normally distributed (IID) over
time. In this case, the maximum likelihood estimator (MLE) is the sample
covariance matrix and the efficiency of MLE is justified asymptotically.

However, there are at least two problems for using the sample covariance
matrix to select the optimal portfolio in practice. First, when the
portfolio size is large, the sample covariance matrix is found to lead to
poor performances in the selected portfolio; see Jobson and Korkie (1980)
and Michaud (1989). Not surprisingly, the sample covariance matrix is rarely
used by practitioners when the portfolio size is large. The reason for the
poor performances is due to the degree-of-freedom argument. That is, too many
parameters have to be estimated in the covariance matrix when the portfolio
size is large. In fact, if the portfolio size is larger than the number of
time series observations in each asset, the sample covariance is always
singular. Second, the returns are not IID over time. This is because
typically the covariance is time varying. In this case, the asymptotic
justification for using the sample covariance matrix is lost.

Many alternative estimators of the large dimensional covariance matrix for
portfolio choice have been proposed in the literature. A rather incomplete
list includes Ledoit and Wolf (2003, 2004, 2014), Frahm and Memmel (2010),
DeMiguel, Garlappi, and Uppal (2009), DeMiguel, Garlappi, Nogales, and Uppal
(2009), Kan and Zhou (2007), Fan, Fan and Lv (2008), Pesaran and Zaffaroni
(2009), Tu and Zhou (2011). Most studies use dimension reduction
techniques. One of the techniques uses factor (either observed factors or
latent factors) models. Another approach uses a statistical technique known
as shrinkage, a method first introduced by Stein (1956). Murihead (1987)
reviewed the literature on shrinkage estimators of the covariance matrix.
All these estimators are constructed from low frequency data (daily, weekly
or monthly data) over a long period (one year or more). However, if the
investment period of a portfolio is much shorter (say one day or one week or
one month) which is empirically more relevant, given the time varying nature
of the covariance, we expect the covariance in the near future to be similar
to the average covariance over an immediate recent time period but not to
that over a long time period. Hence, even if data over a long time period is
available, one may only prefer using data over a short period. If low
frequency data over a short time period are used, however, the
degree-of-freedom argument will be applicable.

The recent availability of quality high-frequency data on financial assets
has motivated a growing literature devoted to the model-free measurement of
covariances. In a recent study, Fan, Li and Yu (2012) proposed to use
high-frequency data to estimate the ICV over a short time period for the
purpose of portfolio choice. Their setup allows one to impose gross exposure
constraints. The use of gross exposure constraints plays a similar role to
the no-short-sale constraint in Jagannathan and Ma (2003). Fan, Li and Yu
(2012) demonstrated the substantial advantages of using high-frequency date
in both simulation and empirical studies.

There are several reasons why it is better to use high frequency data to
estimate the covariance matrix. First, the use of high frequency data
drastically increases the sample size. This is especially true for liquid
assets. Second, one does not need to assume returns are IID any more for
establishing the large sample theory for the estimator. This generalization
is important due to the time-varying nature of spot covariance. Not
surprisingly, the literature on estimating the ICV based on high frequency
data is growing rapidly.

In this paper, we also use high frequency data to estimate the ICV for the
purpose of portfolio choice. Unlike Fan, Li and Yu (2012) where portfolio
choice is done under pre-specified exposure constraints, we focus our
attention on how to get a good shrinkage estimator of the ICV without any
pre-specified constraint.\footnote{%
DeMiguel, Garlappi, Nogales, and Uppal (2009) showed that adding a
constraint for 1-norm of weights is equivalent to shrinkage the estimator of
covariance matrix.} This shift of focus is due to the lack of guidance on
how to specify the gross exposure constraints. Our estimator designs the
shrinkage function as in Ledoit and Wolf (2014). However, we differ from
Ledoit and Wolf (2014) in the following important ways. First, instead of
applying the shrinkage function to the eigenvalues of sample covariance
matrix by assuming the returns are IID, we regularize the eigenvalues of a
designed time variation adjusted (TVA)\textbf{\ }realized covariance matrix
under the assumption that the covariance matrix is time varying. Second,
instead of using low frequency data, we use high frequency data for
constructing the designed TVA realized covariance matrix and estimating its
regularized eigenvalues. We show that our proposed estimator, which will be given in Section 3,  not only has
some desirable properties in terms of estimating the ICV, but also
asymptotically achieves the minimum out-of-sample portfolio risk.

%In particular, we will estimate the eigenvalues
%of ICV over a relatively short but immediately recent time period but
%estimate the eigenvectors of ICV over a relatively long time period.

The paper is organized as follows. In Section 2 we set up the portfolio
choice problem. Section 3 introduces our estimator and discusses its
properties and implementations. In Section 4, we compare the out-of-sample
performance of our proposed method with several methods proposed in the
literature using actual data, including the equal weight, the linear
shrinkage estimator of Ledoit and Wolf (2004), and the high frequency method
of Fan, Li and Yu (2012). Section 5 concludes. The appendix collects the
proof of our theoretical results.

\section{Portfolio Selection: The Setup}

\label{subsec:intro} %%%%%%%%%%%%%%%%%%%%%%%%%%%%%

Suppose that a portfolio is constructed based on a pool of $p$ assets whose
log-price is denoted by $\mbox{\bf X}_{t}=(X_{1t},\cdots ,X_{pt})^{\prime }$%
, where $\mbox{\bf M}^{\prime }$ denotes the transpose of the vector or
matrix $\mbox{\bf M}$. Instead of assuming $\mbox{\bf X}_{t}$ follows a
Brownian motion which means that the log-returns are IID, we assume $%
\mbox{\bf X}_{t}$ follows a more general diffusion process as
\begin{equation}
d\mbox{\bf X}_{t}=\mbox{\boldmath $\mu$}_{t}dt+\pmb\Theta _{t}d\mbox{\bf B}%
_{t},  \label{eqref}
\end{equation}%
where $\mbox{\boldmath $\mu$}_{t}=(\mu _{1t},\cdots ,\mu _{pt})^{\prime }$
is a $p$-dimensional drift process at time $t$, $\pmb\Theta _{t}$ is a $%
p\times p$ (spot) covolatility matrix at time $t$, and $\mbox{\bf B}_{t}$ is
a $p$-dimensional standard Brownian motion.

%Assume the tick-by-tick data on $%
%\mbox{\bf X}_{t}$ is available between period $0$ and $T$.
%Our estimator of
%ICV is constructed from high-frequency data over the period $[0,T]$. In
%particular, high-frequency data over a relatively short and immediately
%recent time period $[T-h,T]$ will be used to obtain the estimator of the
%regularized eigenvalues while high-frequency data over a relatively long period $[0,T-h]$
%will be used to construct the eigenvectors of our estimator of ICV.

A portfolio is constructed based on $\mbox{\bf X}_{t}$ with weight $%
\mbox{\bf
	w}_{T}$ which satisfies $\mbox{\bf w}_{T}^{\prime }\bm{1}=1$ at time $T$
and a holding period $\tau $, where $\bm{1}$ is a $p$-dimensional vector
with all elements being 1. Over the period $[T,T+\tau ]$, it has a return $%
\mbox{\bf w}_{T}^{\prime }\int_{T}^{T+\tau }d\mbox{\bf X}_{t}$, and has a
risk (variance)
\begin{equation*}
R_{T,T+\tau }(\mbox{\bf w}_{T})=\mbox{\bf w}_{T}^{\prime }\widetilde{%
\mbox{\boldmath
		$\Sigma$}}_{T,T+\tau }\mbox{\bf w}_{T},~~~\mathrm{where}~~\widetilde{%
\mbox{\boldmath
		$\Sigma$}}_{T,T+\tau }=\int_{T}^{T+\tau }\mathrm{E}_{T}%
\mbox{\boldmath
	$\Sigma$}_{t}dt,
\end{equation*}%
with $\mbox{\boldmath $\Sigma$}_{t}=\pmb\Theta _{t}\pmb\Theta _{t}^{\prime }$
being the (spot) covariance matrix at time $t$ and $\mathrm{E}_{T}$ denotes
the expectation conditional on information up to time $T$ (see Fan, Li and
Yu, 2012). Typically, the holding period $\tau $ is short (say one day or
one week or one month).

% but $T$ can be relatively large. For
%example, in the U.S., the tick-by-tick data became available after 1993.
%However, due to the concern of possible structural breaks over a very long
%time span, we may not want to use historical data over a very long time
%period.

To focus on finding a good approximation for $\widetilde{%
\mbox{\boldmath
$\Sigma$}}_{_{T,T+\tau }}$, we consider the following global minimum
variance (GMV) problem:
\begin{equation}
\min_{\mbox{\bf w}_T}\mbox{\bf w}_{T}^{\prime }\widetilde{
\mbox{\boldmath
		$\Sigma$}}_{T,T+\tau }\mbox{\bf w}_{T}~~~~\mathrm{with}~~~~\mbox{\bf w}%
_{T}^{\prime }\bm{1}=1.  \label{gmv}
\end{equation}%
By taking the derivative of $\mbox{\bf w}_{T}$, we have the following
theoretical optimal weight,
\begin{equation}
\mbox{\bf w}_{T}=\dfrac{\widetilde{\mbox{\boldmath $\Sigma$}}_{T,T+\tau
}^{-1}\bm{1}}{\bm{1}^{\prime }\widetilde{\mbox{\boldmath $\Sigma$}}
_{T,T+\tau }^{-1}\bm{1}},  \label{min_risk_optimal_weight}
\end{equation}%
which is a function of the expected ICV conditional on the current time $T$,
i.e., $\widetilde{\mbox{\boldmath $\Sigma$}}_{T,T+\tau }$.

Denote the ICV over the period $[T-h,T]$ by
\begin{equation*}
{\mbox{\boldmath $\Sigma$}}_{T-h,T}:=\int_{T-h}^{T}\mbox{\boldmath $\Sigma$}%
_{t}dt.
\end{equation*}%
\newline
If $h$ is small, following Fan, Li and Yu (2012), we use the following
approximation
\begin{equation}  \label{approximate expected ICV}
\widetilde{\mbox{\boldmath $\Sigma$}}_{T,T+\tau }\approx \frac{\tau}{h}{\ %
\mbox{\boldmath $\Sigma$}}_{T-h, T}.
\end{equation}%
Consequently, the theoretical optimal weight becomes
\begin{equation}
\mbox{\bf w}_{T}=\dfrac{{\mbox{\boldmath $\Sigma$}}_{T-h, T}^{-1}\bm{1}}{ %
\bm{1}^{\prime }{\mbox{\boldmath $\Sigma$}}_{T-h, T}^{-1}\bm{1}}.
\label{min_risk_optimal_weightB}
\end{equation}

The reason for choosing a small $h$ from the historical sample (i.e. a small
time span for $[T-h,T]$) to approximate the expected ICV is due to the time
varying and persistent nature of the covariance matrix. If a big $h$ (say 10
years) is used and an average covariance matrix is used to approximate the
expected ICV, the approximation errors would be inevitably large. In fact,
as rightly argued in Fan, Li and Yu (2012), even when the true covariance
matrices are available, an average of them will still lead to large
approximation errors.

Let $\widehat{\mbox{\boldmath $\Sigma$}}_{T-h,T}^{\ast }$ denote a generic
(invertible) estimator of the ICV ${\mbox{\boldmath $\Sigma$}}_{T-h,T}$. The
plug-in estimator of the optimal portfolio weight for $\mbox{\bf w}_{T}$ in (%
\ref{min_risk_optimal_weightB}) is
\begin{equation*}
\hat{\mbox{\bf w}}_{T}^{\ast }:=\dfrac{\left( \widehat{%
\mbox{\boldmath
$\Sigma$}}_{T-h,T}^{\ast }\right) ^{-1}\bm{1}}{\bm{1}^{\prime }\left(
\widehat{\mbox{\boldmath $\Sigma$}}_{T-h,T}^{\ast }\right) ^{-1}\bm{1}}.
\end{equation*}%
We need to find the optimal $\widehat{\mbox{\boldmath $\Sigma$}}%
_{T-h,T}^{\ast }$ for portfolio choice. Given that the optimal portfolio is
typically meant to perform the best out-of-sample, following Ledoit and Wolf
(2014), we define a loss function for portfolio selection to be the
out-of-sample variance of portfolio returns conditional on $\widehat{%
\mbox{\boldmath $\Sigma$}}_{T-h,T}^{\ast }$,
\begin{equation}
\mathcal{L}(\widehat{\mbox{\boldmath $\Sigma$}}_{T-h,T}^{\ast },%
\mbox{\boldmath $\Sigma$}_{T-h,T})=\left( \hat{\mbox{\bf w}}_{T}^{\ast
}\right) ^{\prime }\mbox{\boldmath $\Sigma$}_{T-h,T}\hat{\mbox{\bf w}}%
_{T}^{\ast }=\dfrac{\bm{1}^{\prime }\left( \widehat{\mbox{\boldmath $\Sigma$}%
}_{T-h,T}^{\ast }\right) ^{-1}\mbox{\boldmath $\Sigma$}_{T-h,T}\left(
\widehat{\mbox{\boldmath $\Sigma$}}_{T-h,T}^{\ast }\right) ^{-1}\bm{1}}{%
\left\{ \bm{1}^{\prime }\left( \widehat{\mbox{\boldmath $\Sigma$}}%
_{T-h,T}^{\ast }\right) ^{-1}\bm{1}\right\} ^{2}},  \label{loss}
\end{equation}%
where we approximate $\widetilde{\mbox{\boldmath $\Sigma$}}_{T,T+\tau }$ by $\frac{\tau}{h}\bSig_{T-h, T}$ and ignore the scale $\frac{\tau}{h}$  without any loss.  The best estimator of the ICV is therefore the one that minimizes the loss
function $\mathcal{L}(\widehat{\mbox{\boldmath $\Sigma$}}_{T-h,T}^{\ast },%
\mbox{\boldmath $\Sigma$}_{T-h,T})$.

Although this paper mainly focuses on the GMV problem, our estimation
technique has a much wider implications for other problems that also require
the estimation of ICV, including the Markowitz portfolios with and without
estimating the conditional mean. In the empirical studies, we will show the
usefulness of our proposed method in the context of the Markowitz portfolio.

\section{The New Estimator of ICV}

Denote the trading time points for the $i$th asset by $0\leq
t_{i1}<t_{i2}<...<t_{i,N_{i}}\leq T$ with $i=1,...,p$. It is difficult to
estimate the ICV based on tick-by-tick high frequency data when the number
of stocks ($p$) is large for the following reasons. First, data are always
non-synchronous. Second, data are contaminated by microstructure noises.
Denote $Y_{i,t_{ij}}$ the log-price of the $i$th asset at time $t_{ij}$ and $%
X_{i,t_{ij}}$ the latent log efficient price of the $i$th asset. Then
\begin{equation*}
Y_{i,t_{ij}}=X_{i,t_{ij}}+\epsilon _{i,t_{ij}},
\end{equation*}%
where $\epsilon _{i,t_{ij}}$ is the market microstructure noise at time $%
t_{ij}$. Third,  the spot covariance matrix $\mbox{\boldmath $\Sigma$}_{t}$ of returns of latent log-price $%
\mbox{\bf X}_{t}$  is time varying.
Fourth, the returns of the efficient price are not independent over time. To find a good estimator for the ICV, we first  introduce an initial estimator, denoted the time variation adjust (TVA) realized covariance matrix, and discuss its disadvantages for estimating the ICV in subsection \ref{initial estimator}. To improve the initial estimator, we propose to regularize its eigenvalues.  In subsection \ref{Theoretical background for regularizing}, we provide the theoretical background for regularizing the eigenvalues of  TVA realized covariance matrix. We then demonstrate how to regularize  its eigenvalues  in subsection \ref{Regularized estimators of eigenvalues}.

\subsection{The initial estimator of ICV: TVA}\label{initial estimator}

To simplify the problem, we propose the following structural assumption for $%
\mbox{\bf X}_{t}$. The same assumption was also used in Zheng and Li (2011).

\begin{definition}
(Class $\mathcal{C}$). \label{class_C} Suppose that $\mbox{\bf X}_{t}$ is a $%
p$-dimensional process satisfying Equation \eqref{eqref}. We say that $%
\mbox{\bf
		X}_{t}$ belongs to class $\mathcal{C}$ if, almost surely, there exist $%
\gamma _{t}\in D([T-h,T];\mathbb{R})$ and $\pmb\Lambda $ a $p\times p$
matrix satisfying ${\mathrm{tr}}(\pmb\Lambda \pmb\Lambda ^{\prime })=p$ such
that
\begin{equation*}
\pmb\Theta _{t}=\gamma _{t}\pmb\Lambda ,
\end{equation*}%
where $D([T-h,T];\mathbb{R})$ stands for the space of c$\grave{a}$dl$\grave{a%
}$g functions from $[T-h,T]$ to $\mathbb{R}$.
\end{definition}

\begin{rem}
Class $\mathcal{C}$ allows the covariance matrix to be time varying because $%
\gamma _{t}$ is time varying. The assumption of $\pmb\Theta _{t}=\gamma _{t}%
\pmb\Lambda $ may be too strong than necessary but facilitates the
mathematical proof of the results in the present paper.
\end{rem}

If $\mbox{\bf X}_{t}$ belongs to class $\mathcal{C}$, we can decompose
\begin{equation*}
\mbox{\boldmath $\Sigma$}_{T-h,T}=\int_{T-h}^{T}\gamma _{t}^{2}dt\cdot \pmb%
\Lambda \pmb\Lambda ^{\prime }=\mbox{\bf P}\left( \int_{T-h}^{T}\gamma
_{t}^{2}dt\cdot \bm{\Gamma}\right) \mbox{\bf P}^{\prime },
\end{equation*}%
where $\bm\Gamma $ is a diagonal matrix, $\mbox{\bf P}$ an orthogonal
matrix, and $\mbox{\bf P}\bm\Gamma \mbox{\bf P}^{\prime }$ the
eigen-decomposition of $\pmb\Lambda \pmb\Lambda ^{\prime }$ such that the
eigenvalues and eigenvectors of $\mbox{\boldmath $\Sigma$}_{t}=\pmb\Theta
_{t}\pmb\Theta _{t}^{\prime }$ are time varying and invariant respectively.

To estimate $\mbox{\boldmath $\Sigma$}_{T-h,T}$, Zheng and Li (2011)
proposed to use the so-called TVA realized covariance matrix over the period
$[T-h,T]$, which is defined as
\begin{equation}
\mbox{\bf S}_{T-h,T}^{\mathrm{TVA}}=\dfrac{{\mathrm{tr}}\left( {%
\sum_{k=1}^{n}\Delta \mbox{\bf X}_{k}\Delta \mbox{\bf X}_{k}^{\prime }}%
\right) }{p}\cdot \breve{\mbox{\bf S}}_{T-h,T},~~\mathrm{where}~~\breve{%
\mbox{\bf S}}_{T-h,T}=\dfrac{p}{n}\sum_{k=1}^{n}\dfrac{\Delta \mbox{\bf X}_{{%
k}}\Delta \mbox{\bf X}_{{k}}^{\prime }}{|\Delta \mbox{\bf X}_{k}|^{2}},
\label{S_TVA}
\end{equation}%
$\Delta \mbox{\bf X}_{k}=\mbox{\bf X}_{\tau _{k}}-\mbox{\bf X}_{\tau _{k-1}}$%
, and $\mbox{\bf X}_{\tau _{k}}$ denotes the log efficient price $%
\mbox{\bf
X}_{t}$ at time $\tau _{k}$ for
\begin{equation*}
T-h:=\tau _{0}<\tau _{1}<\cdots <\tau _{n}:=T.
\end{equation*}

Zheng and Li (2011) demonstrated that ${{\mathrm{tr}}\left( {%
\sum_{k=1}^{n}\Delta \mbox{\bf X}_{{k}}\Delta \mbox{\bf X}_{{k}}^{\prime }}%
\right) }/{p}$ is a good estimator for $\int_{T-h}^{T}\gamma _{t}^{2}dt$ and
$\breve{\mbox{\bf S}}_{T-h,T}$ is similar to the sample covariance matrix
with IID samples. Here similarity means that $\breve{\mbox{\bf S}}_{T-h,T}$
is a consistent estimator of population covariance matrix $\pmb\Lambda \pmb%
\Lambda ^{\prime }$ when $p$ is fixed, while the limiting spectral
distribution of $\breve{\mbox{\bf S}}_{T-h,T}$, which will be introduced
later in the paper, is equivalent to that of the sample covariance matrix of
IID samples generated from a distribution with zeros mean and population
covariance $\pmb\Lambda \pmb\Lambda ^{\prime }$, when $p$ goes to $\infty $
together with the sample size $n$.

Clearly, the construction of TVA requires a synchronous record of $p$ assets
at $\left( \tau _{0},\tau _{1},\cdots ,\tau _{n}\right) $. Since data is
always non-synchronous, we need to synchronize them. In this paper, we use
the previous tick method (see Zhang, 2011) to interpolate the prices.
However, the efficient price is latent due to the presence of microstructure
noise. To deal with this problem, we suggest using sparse sampling so that
the impact of microstructure noise can be ignored. Based on a Hausman type
test, A\"{\i}t-Sahalia and Xiu (2016) showed that when data are sampled
every 15 minutes, the observed prices are free of the microstructure noise
problem. In this paper, we will follow this suggestion by sampling the
interpolated data every 15 minutes. Denote $\left( \tau _{0},\tau
_{1},\cdots ,\tau _{n}\right) $ the time stamps at every 15 minutes. So $%
\mbox{\bf Y}_{\tau _{k}}\approx \mbox{\bf X}_{\tau _{k}}$.

Denote the sparsely-sampled log-prices by $\mbox{\bf Y}_{\tau _{0}},%
\mbox{\bf Y}_{\tau _{1}},...,\mbox{\bf Y}_{\tau _{n}}$. The feasible TVA
realized covariance matrix is constructed as
\begin{equation}
\widetilde{\mbox{\bf S}}_{T-h,T}^{\mathrm{TVA}}=\dfrac{{\mathrm{tr}}\left( {%
\sum_{k=1}^{n}\Delta \mbox{\bf Y}_{{k}}\Delta \mbox{\bf Y}_{{k}}^{\prime }}%
\right) }{n}\sum_{k=1}^{n}\dfrac{\Delta \mbox{\bf Y}_{{k}}\Delta \mbox{\bf Y}%
_{{k}}^{\prime }}{|\Delta \mbox{\bf Y}_{k}|^{2}},  \label{S_TVAtilde}
\end{equation}%
with $\Delta \mbox{\bf Y}_{{k}}=\mbox{\bf Y}_{\tau _{k}}-\mbox{\bf Y}_{\tau
_{k-1}}$. Since $\widetilde{\mbox{\bf S}}_{T-h,T}^{\mathrm{TVA}}$ has the
same properties as ${\mbox{\bf S}}_{T-h,T}^{\mathrm{TVA}}$, we treat $%
\widetilde{\mbox{\bf S}}_{T-h,T}^{\mathrm{TVA}}$ the same as ${\mbox{\bf S}}%
_{T-h,T}^{\mathrm{TVA}}$ and only use ${\mbox{\bf S}}_{T-h,T}^{\mathrm{TVA}}$
in the rest of this paper.

It is well-known that the eigenvalues of the sample covariance matrix are
more spread out than those of the population covariance matrix. This
property is applicable not only to the sample covariance matrix but also to $%
\mbox{\bf S}_{T-h,T}^{\mathrm{TVA}}$. In other words, the smallest
eigenvalues of $\mbox{\bf S}_{T-h,T}^{\mathrm{TVA}}$ tend to be biased
downwards, while the largest ones upwards. As a result, there is a need to
regularize the eigenvalues of $\mbox{\bf S}_{T-h,T}^{\mathrm{TVA}}$.

\subsection{Theoretical background for regularizing the eigenvalues of $
\mbox{\bf S}_{T-h,T}^{\mathrm{TVA}}$} \label{Theoretical background for regularizing}

Let us first introduce some concepts in the random matrix theory. Let $p$
denote the number of variables and $n=n(p)$ the sample size. For any $%
p\times p$ symmetric matrix $\mbox{\bM}$, suppose that its eigenvalues are $%
\lambda _{1},\cdots ,\lambda _{p}$, sorted in the non-increasing order. Then
the empirical spectral distribution (ESD) of $\mbox{\bM}$ is defined as
\begin{equation*}
F^{\mbox{\bM}}(x):=\dfrac{1}{p}\sum_{i=1}^{p}\ \mathbb{I}(\lambda _{i}\leq
x),~~~~~\mathrm{for}~~x\in \mathbb{R},
\end{equation*}
where $\mathbb{I}$ denotes the indicator function of a set. The limit of ESD
as $p\rightarrow \infty $, if exists, is referred to the limiting spectral
distribution (LSD hereafter). Let $\mathrm{Supp(G)}$ denotes the support
interval of distribution function $G$. For any distribution $G$, $%
s_{G}(\cdot )$ denotes its Stieltjes transform defined as
\begin{equation*}
s_{G}(z)=\int \dfrac{1}{\lambda -z}dG(\lambda ),~~~~\mathrm{for}~z\in
\mathbb{C}^{+}:=\{z\in \mathbb{C}:\Im (z)>0\},
\end{equation*}%
where $\Im (\cdot )$ denotes the imaginary part of a complex number.

\subsubsection{The limit of loss function}

Suppose the eigen-decomposition of ${\mbox{\bf S}}_{T-h,T}^{\mathrm{TVA}}$
is
\begin{equation}
{\mbox{\bf S}}_{T-h,T}^{\mathrm{TVA}}=\mbox{\bf U}\mbox{\bf V}\mbox{\bf U}%
^{\prime }=\mbox{\bf U}\mathrm{diag}(v_{1},...,v_{p})\mbox{\bf U}^{\prime },
\label{eige-decom of TVA}
\end{equation}%
where $v_{1},...,v_{p}$ are eigenvalues of ${\mbox{\bf S}}_{T-h,T}^{\mathrm{%
TVA}}$ sorted in the non-increasing order, $\mbox{\bf U}=(\mbox{\bf u}%
_{1},...,\mbox{\bf u}_{p})$ are corresponding eigenvectors. Let ${\mathrm{%
diag}}(\mbox{\bf M})$ denote a diagonal matrix with the diagonal elements
being the diagonal elements of $\mbox{\bf M}$ if $\mbox{\bf M}$ is a matrix
or being $\mbox{\bf M}$ if $\mbox{\bf M}$ is a vector.

To regularize the eigenvalues of $\mbox{\bf S}_{T-h,T}^{\mathrm{TVA}}$,
following Ledoit and Wolf (2014), we restrict our attention to a class of
rotation-equivalent estimators which is defined below. This strategy allows
us to use a nonlinear shrinkage method to regularize the eigenvalues.
However, different from Ledoit and Wolf (2014), we do not assume returns are
IID. Instead we assume that $\mbox{\bf X}_{t}$ $\in \mathcal{C}$.

\begin{definition}
\label{class_S}(Class of Estimators $\mathcal{S}$). We consider a generic
positive definite estimator for $\mbox{\boldmath $\Sigma$}_{T-h,T}$ of the
type $\widehat{\mbox{\boldmath $\Sigma$}}_{T-h,T}^{\ast }:={\mbox{\bf U}}{%
\mathrm{diag}}(g_{n}(v_{1}),\cdots ,{g}_{n}(v_{p})){\mbox{\bf U}}^{\prime }$%
, with $v_{1}\geq \cdots \geq v_{p}$ being the eigenvalues of $%
\mbox{\bf
		S}_{T-h,T}^{\mathrm{TVA}}$, ${\mbox{\bf U}}=(\mbox{\bf u}_{1},...,%
\mbox{\bf u}_{p})$ being corresponding eigenvectors. Here $g_{n}$ is a real
univariate function and can depend on ${\mbox{\bf S}}_{T-h,T}^{\mathrm{TVA}}$%
. We assume that there exists a nonrandom real univariate function ${g}(x)$,
defined on $\mathrm{Supp(F)}$ and continuously differentiable, {such that $%
g_{n}(x)\overset{a.s.}{\longrightarrow }g(x)$}, for all $x\in \mathrm{Supp(F)%
}$, where $F$ denotes the LSD of $\mbox{\bf S}_{T-h,T}^{\mathrm{TVA}}$.
\end{definition}

Here, $g_{n}(x)$ is called the \textit{shrinkage function} because what it
does is to shrink the eigenvalues of $\mbox{\bf S}_{T-h, T}^{\mathrm{TVA}}$
by reducing the dispersion around the mean, pushing up the small ones and
pulling down the large ones. The high dimensional asymptotic properties of $%
\mbox{\bf S}_{T-h, T}^{\mathrm{TVA}}$ are fully characterized by its
limiting shrinkage function $g(x)$. As noted in Stein (1975) and Ledoit and
Wolf (2014), the estimators in this class are rotation equivalent, a
property that is desired when the user does not have any prior preference
about the orientation of the eigenvectors.

Since we consider the case that $p$ goes to $\infty $ together with the
sample size, finding the optimal estimator of $\mbox{\boldmath
$\Sigma$}_{T-h,T}$ within class $\mathcal{S}$ for portfolio selection is
equivalent to finding the optimal shrinkage function $g(x)$ that minimizes
the limit of the loss function $\mathcal{L}\left( \widehat{\mbox{\boldmath
$\Sigma$}}_{T-h,T}^{\ast },\mbox{\boldmath $\Sigma$}_{T-h,T}\right) $ for $%
\widehat{\mbox{\boldmath
$\Sigma$}}_{T-h,T}^{\ast }\in \mathcal{S}$. We have the following theorem to
show the limit of $\mathcal{L}\left( \widehat{\mbox{\boldmath
$\Sigma$}}_{T-h,T}^{\ast },\mbox{\boldmath $\Sigma$}_{T-h,T}\right) $.

\begin{thm}
\label{thm:limit_loss} Suppose that $\mbox{\bf X}_t$ is a $p$ -dimensional
diffusion process in class $\mathcal{C}$ for some drift process $%
\mbox{\boldmath $\mu$}_t$, covolatility process $\pmb\Theta_t=\gamma_t\pmb
\Lambda$ and $p$-dimensional Brownian motion $\mbox{\bf B}_t$, which
satisfies the following assumptions:

\begin{description}
\item[(A.i)] $\mbox{\boldmath $\mu$}_t=0$ for $t\in[T-h, T]$, and $\gamma_t$
is independent of  $\mbox{\bf B}_t$.

\item[(A.ii)] There exists $C_0<\infty$ such that for all $p$, $%
|\gamma_t|\in (1/C_0,C_0)$ for all $t\in[T-h, T]$ almost surely;

\item[(A.iii)] All eigenvalues of $\breve{\mbox{\boldmath $\Sigma$}}=\pmb
\Lambda\pmb\Lambda^{\prime }$ are bounded uniformly from 0 and infinity;

\item[(A.iv)] $\lim_{p\rightarrow \infty }{\mathrm{tr}}\left( %
\mbox{\boldmath $\Sigma$}_{T-h, T}\right) /p=\lim_{p\rightarrow \infty
}\int_{T-h}^T\gamma _{t}^{2}dt:=\theta >0$ almost surely;

\item[(A.v)] Almost surely, as $p\rightarrow \infty $, the ESD of $%
\mbox{\boldmath
 		$\Sigma$}_{T-h, T}$ converges to a probability distribution $H$ on a
finite support;

\item[(A.vi)] The observation time points $\tau _{k}$'s are independent of
the Brownian motion $\mbox{\bf B}_{t}$ and there exists a constant $C_{1}>0$
such that $\max_{1\leq k \leq n}n(\tau _{k }-\tau _{k-1})\leq C_{1}$.
\end{description}

If $p/n\rightarrow y\in (0,\infty )$, then the ESD of $\mbox{\bf S}_{T-h,T}^{%
\mathrm{TVA}}$ converges almost surely to a nonrandom probability
distribution $F$. If Equation (\ref{eige-decom of TVA}) is satisfied, then
\begin{equation*}
p\times\mathcal{L}\left( \widehat{\mbox{\boldmath $\Sigma$}}_{T-h,T}^{\ast },%
\mbox{\boldmath $\Sigma$}_{T-h,T}\right) \overset{a.s.}{\rightarrow } \int
\frac{x}{|1-y-yx\times \breve{s}_{F}(x)|^{2}g(x)}dF(x)/\left( \int \frac{%
dF(x)}{g(x)}\right) ^{2},
\end{equation*}
where $\widehat{\mbox{\boldmath $\Sigma$}}_{T-h,T}^{\ast }:={\mbox{\bf U}}{%
\mathrm{diag}}(g_{n}(v_{1}),\cdots ,{g}_{n}(v_{p})){\mbox{\bf
U}}^{\prime }$ is in class $\mathcal{S}$ by regularizing $\mbox{\bf S}%
_{T-h,T}^{\mathrm{TVA}}$, $g(x)$ is the limiting shrinkage function of $%
\widehat{\mbox{\boldmath
$\Sigma$}}_{T-h,T}^{\ast }$. In addition, for all $x\in (0,\infty )$, $%
\breve{s}_{F}(x)$ is defined as $\lim_{z\in \mathbb{C}^{+}\rightarrow x
}s_{F}(z)$, and $s_{F}(z)$ is the Stieltjes transform of the limiting
spectral distribution of $\mbox{\bf S}_{T-h,T}^{\mathrm{TVA}}$.
\end{thm}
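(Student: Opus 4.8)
The plan is to split the argument into a \emph{reduction} step, showing that $\bS_{T-h,T}^{\mathrm{TVA}}$ behaves --- both spectrally and against fixed quadratic forms --- like an ordinary sample covariance matrix with population $\bSig_{T-h,T}$, and an \emph{evaluation} step in which a Ledoit and Wolf (2014) type random-matrix computation is carried out on that surrogate. For the reduction: under (A.i) there is no drift and $\gamma_t$ is independent of $\bB_t$, so, conditionally on $\{\gamma_t\}$, the increments satisfy $\Delta\bX_k\approx\gamma_{\tau_{k-1}}\pmb\Lambda\,\Delta\bB_k$ with the $\Delta\bB_k$ independent and $N(\bm{0},(\tau_k-\tau_{k-1})\bI)$; the error from replacing the It\^o increment by this frozen-coefficient Gaussian increment is controlled uniformly in $p$ using (A.ii), (A.vi) and standard diffusion moment bounds, as in Zheng and Li (2011). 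Because $\breve{\bS}_{T-h,T}$ uses the \emph{normalized} increments $\Delta\bX_k/|\Delta\bX_k|$, the scalar $\gamma_{\tau_{k-1}}$ cancels, and since $|\pmb\Lambda\bz_k|^{2}$ concentrates around $\mathrm{tr}(\pmb\Lambda\pmb\Lambda')=p$ by (A.iii) and Gaussian concentration, $\breve{\bS}_{T-h,T}$ differs from $\tfrac1n\pmb\Lambda\bZ\bZ'\pmb\Lambda'$ --- with $\bZ=(\bz_1,\dots,\bz_n)$ having i.i.d.\ $N(0,1)$ entries --- by an asymptotically negligible perturbation, while the prefactor $\mathrm{tr}(\sum_k\Delta\bX_k\Delta\bX_k')/p$ converges a.s.\ to $\theta=\int_{T-h}^T\gamma_t^2\,dt$ by (A.iv). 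Hence $\bS_{T-h,T}^{\mathrm{TVA}}$ behaves, up to a vanishing perturbation, like the sample covariance matrix $\tfrac1n\bSig_{T-h,T}^{1/2}\bZ\bZ'\bSig_{T-h,T}^{1/2}$ with population $\bSig_{T-h,T}$, so by (A.v), $p/n\to y$ and Marchenko--Pastur--Silverstein theory its ESD converges a.s.\ to the deterministic $F$ solving the usual fixed-point equation; this also defines $s_F$ and its boundary values $\breve{s}_F(x)=\lim_{z\to x}s_F(z)$.

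For the evaluation step, write $p\times\mathcal L(\widehat{\bSig}_{T-h,T}^{\ast},\bSig_{T-h,T})=A_p/B_p^{2}$ with $B_p:=\tfrac1p\bm{1}'(\widehat{\bSig}_{T-h,T}^{\ast})^{-1}\bm{1}$ and $A_p:=\tfrac1p\bm{1}'(\widehat{\bSig}_{T-h,T}^{\ast})^{-1}\bSig_{T-h,T}(\widehat{\bSig}_{T-h,T}^{\ast})^{-1}\bm{1}$, and analyze both through the resolvent $\bR(z):=(\bS_{T-h,T}^{\mathrm{TVA}}-z\bI)^{-1}$. Since $(\widehat{\bSig}_{T-h,T}^{\ast})^{-1}=\varphi_n(\bS_{T-h,T}^{\mathrm{TVA}})$ with $\varphi_n=1/g_n$, and $g_n\to g$ with $g$ continuous and bounded away from $0$ on $\mathrm{Supp}(F)$, one first replaces $\varphi_n$ by $\varphi=1/g$ up to $o(1)$ and writes $\varphi(\bS_{T-h,T}^{\mathrm{TVA}})=\tfrac{-1}{2\pi i}\oint\varphi(z)\bR(z)\,dz$ along a contour enclosing $\mathrm{Supp}(F)$; this needs a ``no-outliers'' statement --- all eigenvalues of $\bS_{T-h,T}^{\mathrm{TVA}}$ eventually lie in a fixed neighbourhood of $\mathrm{Supp}(F)$ --- which follows from (A.ii)--(A.v). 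For $B_p$ the input is the first-order deterministic equivalent $\tfrac1p\bm{1}'\bR(z)\bm{1}-s_F(z)\overset{a.s.}{\to}0$ (this is where one uses that $\bm{1}$ is generic relative to the eigenbasis of $\pmb\Lambda\pmb\Lambda'$, so the quadratic form concentrates onto the normalized trace), and integrating against $\varphi$ gives $B_p\overset{a.s.}{\to}\int g(x)^{-1}\,dF(x)$. For $A_p$ one needs the analogous deterministic equivalent for the sandwiched form $\tfrac1p\bm{1}'\bR(z_1)\bSig_{T-h,T}\bR(z_2)\bm{1}$, supplied by the Ledoit and P\'ech\'e eigenvector-overlap asymptotics (equivalently, the bilinear form built from the derivative of the Marchenko--Pastur map); after a double Cauchy integral and a Stieltjes-inversion step this produces the factor $x/|1-y-yx\,\breve{s}_F(x)|^{2}$ inside the integral, i.e.\ the numerator on the right-hand side. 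Taking the ratio $A_p/B_p^{2}$ then yields the claimed limit.

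The main obstacle is the treatment of $A_p$. The sandwiched quadratic form couples the random eigenvectors $\bu_i$ of $\bS_{T-h,T}^{\mathrm{TVA}}$ with the fixed matrix $\bSig_{T-h,T}$, and producing the weight $x/|1-y-yx\,\breve{s}_F(x)|^{2}$ --- morally the a.s.\ limit of $\bu_i'\bSig_{T-h,T}\bu_i$ conditional on $v_i=x$ --- requires the second-order random-matrix calculus (Ledoit and P\'ech\'e-type limits of $\tfrac1p\sum_i\phi(v_i)|\bu_i'\bv_j|^{2}$, with $\{\bv_j\}$ the eigenvectors of $\bSig_{T-h,T}$), together with careful control of the boundary behaviour as $\Im z\downarrow0$ so that $\breve{s}_F$ is well defined and the contour can be collapsed onto $\mathrm{Supp}(F)$; the edges of the support of $F$ are what make interchanging limits and integrals delicate here. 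The reduction step --- uniform-in-$p$ control of the diffusion discretization and of the self-normalization $\Delta\bX_k/|\Delta\bX_k|$ --- is the other technically demanding ingredient, but it largely parallels Zheng and Li (2011); the remainder is careful bookkeeping of the Ledoit and Wolf (2014) computation transplanted to $\bS_{T-h,T}^{\mathrm{TVA}}$.
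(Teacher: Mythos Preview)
Your overall strategy is sound and would work, but it is considerably more elaborate than the route the paper actually takes. The paper does not analyse the sandwiched quadratic form $\tfrac1p\bm{1}'\bR(z_1)\bSig_{T-h,T}\bR(z_2)\bm{1}$ via a double contour integral at all. Instead it first proves Theorem~\ref{thm:LSD_d} (the Ledoit--P\'ech\'e type limit for $\Psi_p(x)=\tfrac1p\sum_i \bu_i'\bSig_{T-h,T}\bu_i\,\mathbb{I}(v_i\le x)$), and then derives Theorem~\ref{thm:limit_loss} from it in two short steps: (i) replace the quadratic forms in $\bm{1}$ by traces, i.e.\ show $\tfrac1p\bm{1}'(\widehat{\bSig}^{\ast})^{-1}\bm{1}-\tfrac1p\tr\{(\widehat{\bSig}^{\ast})^{-1}\}\to 0$ and the analogous statement for the numerator, using a Bai--Silverstein quadratic-form moment bound plus Borel--Cantelli; (ii) recognise the traces as empirical integrals against $F^{\bS^{\mathrm{TVA}}}$ (for $B_p$) and against $\Psi_p$ (for $A_p$), so that ESD convergence and Theorem~\ref{thm:LSD_d} give the limits directly. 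The factor $x/|1-y-yx\,\breve{s}_F(x)|^{2}$ thus appears because it is exactly $d\Psi/dF$, not via a Stieltjes-inversion / edge-collapse argument on a two-resolvent contour integral.

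What this buys the paper is modularity: the hard random-matrix work (reduction of $\bS^{\mathrm{TVA}}$ to an IID surrogate and the Ledoit--P\'ech\'e limit) is done once in Theorem~\ref{thm:LSD_d}, and Theorem~\ref{thm:limit_loss} becomes a short corollary with no contour integrals, no ``no-outliers'' step, and no boundary-limit delicacies. Your approach packages the same ingredients differently --- you effectively rederive Theorem~\ref{thm:LSD_d} inline through the resolvent calculus --- which is correct but heavier. One point to flag in either approach: the step that replaces $\tfrac1p\bm{1}'M\bm{1}$ by $\tfrac1p\tr M$ (your ``$\bm{1}$ is generic relative to the eigenbasis'' remark) is exactly what the paper invokes Bai--Silverstein for, and it deserves a precise justification since $\bm{1}$ is deterministic while the randomness sits in $M$; the paper handles this tersely, and your proposal should too.
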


\begin{rem}
Theorem \ref{thm:limit_loss} extends the result in Proposition 3.1 of Ledoit
and Wolf (2014) from the IID case to Class $\mathcal{C}$ and from the sample
covariance to the TVA realized covariance.
\end{rem}

\begin{rem}
Without loss of generality, if we assume that all the eigenvalues of $%
\widehat{\mbox{\boldmath $\Sigma$}}_{T-h,T}^{\ast }$ and $\mbox{\boldmath
$\Sigma$}_{T-h,T}$ are bounded, ${\bm{1}^{\prime }\left( \widehat{%
\mbox{\boldmath $\Sigma$} }_{T-h,T}^{\ast }\right) ^{-1}\mbox{\boldmath
$\Sigma$}_{T-h,T}\left(  \widehat{\mbox{\boldmath $\Sigma$}}_{T-h,T}^{\ast
}\right) ^{-1}\bm{1}}=O_p(p)$  and $ {\ \bm{1}^{\prime }\left( \widehat{%
\mbox{\boldmath $\Sigma$}} _{T-h,T}^{\ast }\right) ^{-1}\bm{1} }=O_p(p)$, so
that $\mathcal{L}\left( \widehat{\mbox{\boldmath $\Sigma$}}_{T-h,T}^{\ast },%
\mbox{\boldmath $\Sigma$}_{T-h,T}\right) =O_p(\frac{1}{p})$. This is why we
investigate the limiting behavior of $p\times\mathcal{L}\left( \widehat{%
\mbox{\boldmath $\Sigma$}}_{T-h,T}^{\ast },\mbox{\boldmath $\Sigma$}%
_{T-h,T}\right) $ in Theorem \ref{thm:limit_loss}. %
%if $\mbox{\bf S}_{T-h,T}^{\mathrm{TVA}}$ in (\ref
%{tva}) admits a spectral decomposition $\widetilde{\bP}\mbox{\bf D}_{n}%
%\widetilde{\bP}^{\prime}$, then
\end{rem}

\begin{lem}
\label{lem:func_d}Under the assumptions of Theorem \ref{thm:limit_loss}, a
generic positive-definite estimator $\widehat{\mbox{\boldmath $\Sigma$}}%
_{T-h,T}^*$ within class $\mathcal{S}$ minimizes the almost sure limit of
the loss function $\mathcal{L}\left( \widehat{ \mbox{\boldmath $\Sigma$}}%
_{T-h,T}^*,\mbox{\boldmath $\Sigma$}_{T-h,T}\right) $ if and only if its
limiting shrinkage function $g$ satisfies
\begin{equation}
g(x)=\dfrac{x}{|1-y-yx\times \breve{m}_{F}(x)|^{2}},~~~~~\forall ~x\in
\mathrm{Supp(F).}  \label{limit eqn of eigenvalues}
\end{equation}
\end{lem}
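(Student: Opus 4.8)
The plan is to recognize the statement as a constrained optimization over the limiting shrinkage function and to solve it by a Cauchy--Schwarz (equivalently, Lagrange-multiplier) argument. By Theorem~\ref{thm:limit_loss}, for any $\widehat{\bSig}_{T-h,T}^{\ast}\in\mathcal{S}$ with limiting shrinkage function $g$ one has $p\cdot\mathcal{L}(\widehat{\bSig}_{T-h,T}^{\ast},\bSig_{T-h,T})\overset{a.s.}{\rightarrow}\Phi(g)$, where
\begin{equation*}
\Phi(g):=\int\frac{x}{|1-y-yx\,\breve{m}_{F}(x)|^{2}}\,\frac{dF(x)}{g(x)^{2}}\;\bigg/\;\left(\int\frac{dF(x)}{g(x)}\right)^{2},
\end{equation*}
and $\breve{m}_{F}(x)=\lim_{z\to x}s_{F}(z)$ is the boundary value of the Stieltjes transform of $F$ (the $\breve{s}_{F}$ of Theorem~\ref{thm:limit_loss}). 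Since $\mathcal{L}(\cdot,\bSig_{T-h,T})$ is homogeneous of degree zero in its first argument, $\Phi(\lambda g)=\Phi(g)$ for every $\lambda>0$, and $\Phi$ depends on $g$ alone. Hence minimizing the almost-sure limit of the loss over $\mathcal{S}$ amounts to minimizing $\Phi(g)$ over admissible $g$ — positive and continuously differentiable on $\mathrm{Supp}(F)$. Put $a(x):=x\,|1-y-yx\,\breve{m}_{F}(x)|^{-2}$; this is a fixed, $g$-free function which under (A.ii)--(A.v) is continuous, bounded, and strictly positive on $\mathrm{Supp}(F)$ (the value at a possible atom at $0$, present when $y>1$, being the finite positive limit of the $0/0$ expression), so $\Phi(g)$ is well defined and finite.

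Next I would set $h(x):=1/g(x)>0$, so that $\Phi(g)=\big(\int a\,h^{2}\,dF\big)\big/\big(\int h\,dF\big)^{2}$, and apply the Cauchy--Schwarz inequality in $L^{2}(dF)$ to $\sqrt{a}\,h$ and $a^{-1/2}$:
\begin{equation*}
\left(\int h\,dF\right)^{2}=\left(\int\sqrt{a}\,h\cdot a^{-1/2}\,dF\right)^{2}\le\left(\int a\,h^{2}\,dF\right)\left(\int a^{-1}\,dF\right),
\end{equation*}
which gives the $g$-free lower bound $\Phi(g)\ge\big(\int a^{-1}\,dF\big)^{-1}$. Equality in Cauchy--Schwarz holds $dF$-almost everywhere if and only if $\sqrt{a}\,h\propto a^{-1/2}$, i.e.\ $h(x)=\kappa/a(x)$, i.e.\ $g(x)=a(x)/\kappa$ for some $\kappa>0$, and continuity of $g$ and $a$ upgrades this to all of $\mathrm{Supp}(F)$. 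Therefore $g$ minimizes $\Phi$ if and only if $g$ is a positive scalar multiple of $x\,|1-y-yx\,\breve{m}_{F}(x)|^{-2}$ on $\mathrm{Supp}(F)$; as all such $g$ yield the same loss (they induce the same plug-in weight $\hat{\mbox{\bf w}}_{T}^{\ast}$, by the scale-invariance noted above), normalizing the constant to $1$ gives precisely~(\ref{limit eqn of eigenvalues}), which is the asserted equivalence. The same first-order condition follows equally from a direct Lagrange computation: fixing $\int g^{-1}\,dF=1$ and setting the $g(x)$-derivative of $\int a\,g^{-2}\,dF-\mu\int g^{-1}\,dF$ to zero yields $-2a(x)g(x)^{-3}+\mu\,g(x)^{-2}=0$, i.e.\ $g(x)\propto a(x)$.

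The optimization above is elementary; the real work — which also establishes the sufficiency direction by guaranteeing that the candidate $g(x)=x\,|1-y-yx\,\breve{m}_{F}(x)|^{-2}$ is an admissible element of $\mathcal{S}$ (strictly positive and $C^{1}$ on $\mathrm{Supp}(F)$, hence $\widehat{\bSig}_{T-h,T}^{\ast}$ positive definite) — is the random-matrix input: boundary regularity of the Stieltjes transform $s_{F}$ of the LSD of $\bS_{T-h,T}^{\mathrm{TVA}}$ (itself the Mar\v{c}enko--Pastur-type deformation of $H$, by the discussion preceding Theorem~\ref{thm:limit_loss}), the non-vanishing of $1-y-yx\,\breve{m}_{F}(x)$ on $\mathrm{Supp}(F)\cap(0,\infty)$, and the continuous-extension statement at the point $0$ when $y>1$. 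I expect verifying these analytic properties of $\breve{m}_{F}$ to be the main technical hurdle; once they are in hand, the proof is closed by the Cauchy--Schwarz/Lagrange argument above.
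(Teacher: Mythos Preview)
Your proposal is correct and matches the paper's approach: the paper does not give a self-contained argument but simply states that the lemma ``is a direct conclusion from Theorem~\ref{thm:limit_loss} and Proposition~4.1 of Ledoit and Wolf (2014),'' and your Cauchy--Schwarz/Lagrange optimization of $\Phi(g)$ is precisely the content of that cited proposition. Your discussion of scale invariance and of the regularity of $\breve{m}_{F}$ needed to make the optimal $g$ admissible goes somewhat beyond what the paper spells out, but is consistent with it.
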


Lemma \ref{lem:func_d} is a direct conclusion from Theorem \ref%
{thm:limit_loss} and Proposition 4.1 of Ledoit and Wolf (2014).
Unfortunately, the above minimization problem does not yield a closed-form
solution for $g(x)$ because of $\breve{m}_{F}(x)$ is unknown. In addition,
finding $\breve{m}_{F}(x)$ and then $g(x)$ is numerically difficult in
practice. Finding a good algorithm for estimating $\breve{m}_{F}(x)$ is of
great interest as it was done in Ledoit and Wolf (2014) that used a
commercial package. However, in this paper we propose to find an alternative
interpretation of $g(x)$, which offers an easier way to approximate $g(x)$.

\subsubsection{Alternative interpretation of $g(x)$}

Motivated from Ledoit and P\`{e}ch\`{e} (2011), we can show that $g(x)$ in (%
\ref{limit eqn of eigenvalues}) is equivalent to the asymptotic quantity
corresponding to the oracle nonlinear shrinkage estimator derived from the
following Frobenius norm of the difference between $\mbox{\bf U}\widetilde{%
\mbox{\bf V}}\mbox{\bf U}^{\prime }$ and $\mbox{\boldmath
	$\Sigma$}_{T-h,T}$, i.e.,
\begin{equation*}
\min_{\widetilde{\mbox{\bf V}}~\mathrm{\ diagonal}}\Vert \mbox{\bf U}%
\widetilde{\mbox{\bf V}}\mbox{\bf U}^{\prime }-\mbox{\boldmath $\Sigma$}%
_{T-h,T}\Vert _{F},
\end{equation*}%
where the Frobenius norm is defined as $\Vert \mbox{\bf M}\Vert _{F}=\sqrt{%
\mathrm{tr(\mbox{\bf M}\mbox{\bf M}^{\prime })}}$ for any real matrix $%
\mbox{\bf M}$.

Elementary matrix algebra shows that the solution is
\begin{equation}
\widetilde{\mbox{\bf V}} =\mathrm{diag}(\tilde{v}_{1},\cdots ,\tilde{v}%
_{p}),~~~~\mathrm{where}~\tilde{v}_{i}=\mbox{\bf u} _{i}^{\prime }%
\mbox{\boldmath $\Sigma$}_{T-h,T}\mbox{\bf u}_{i},~i=1,\cdots ,p.
\label{sol of min fro}
\end{equation}%
To characterize the asymptotic behavior of $\tilde{v}_{i},i=1,\cdots ,p$,
following the idea of Ledoit and P\`ech\`e (2011), we define the following
non-decreasing function
\begin{equation}  \label{eqn:ESD_d}
\Psi _{p}(x)=\dfrac{1}{p}\sum_{i=1}^{p}\tilde{v}_{i}~\mathbb{I}(v _{i}\leq
x)=\dfrac{1}{p}\sum_{i=1}^{p}\mbox{\bf u}_{i}^{\prime }%
\mbox{\boldmath
$\Sigma$}_{T-h,T}\mbox{\bf u}_{i}\cdot \mathbb{I}(v _{i}\leq x).
\end{equation}

\begin{thm}
\label{thm:LSD_d}Assume that assumptions (A.i)-(A.vi) in Theorem \ref%
{thm:limit_loss} hold true and let $\Psi _{p}$ be defined as in %
\eqref{eqn:ESD_d}. If $p/n\rightarrow y\in (0,\infty )$, then there exists a
nonrandom function $\Psi $ defined over $\mathbb{R}$ such that $\Psi _{p}(x)$
converges almost surely to $\Psi (x)$ for all $x\in \mathbb{R}\backslash
\{0\}$. If in addition $y\neq 1$, then $\Psi $ can be expressed as
\begin{equation}
\forall~ x\in \mathbb{R},~~~\Psi (x)=\int_{-\infty }^{x}\delta (v )dF(v ),
\label{eqn_psi}
\end{equation}
where $F$ is the LSD of $\mbox{\bf S}_{T-h, T}^{\mathrm{TVA}}$, and if $v >0$%
,
\begin{equation*}
\delta (v )=\dfrac{v}{|1-y-y v\times \breve{m}_{F}(v)|^{2} }.
\end{equation*}
\end{thm}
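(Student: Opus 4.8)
The plan is to pass to Stieltjes transforms and then invert. Writing, for the (finite, positive) measure $d\Psi_p$ from \eqref{eqn:ESD_d},
\[
m_{\Psi_p}(z):=\int\frac{d\Psi_p(x)}{x-z}=\frac1p\sum_{i=1}^p\frac{\tilde v_i}{v_i-z}=\frac1p\,\mathrm{tr}\!\left(\bSig_{T-h,T}\bigl(\mbox{\bf S}_{T-h,T}^{\mathrm{TVA}}-z\mbox{\bf I}\bigr)^{-1}\right),\qquad z\in\mathbb{C}^{+},
\]
the first step is to show that, for every fixed $z\in\mathbb{C}^{+}$, $m_{\Psi_p}(z)$ converges almost surely to a deterministic function $\Theta(z)$. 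Granting this, the $\Psi_p$ are non-decreasing, supported on $[0,\infty)$ (as $\mbox{\bf S}_{T-h,T}^{\mathrm{TVA}}$ is positive semidefinite), with total mass $\Psi_p(+\infty)=p^{-1}\mathrm{tr}(\bSig_{T-h,T})\to\theta<\infty$ by (A.iv) and with the largest atom-location $v_1$ bounded a.s.; a Helly selection argument plus uniqueness of the Stieltjes transform then produce a unique non-decreasing $\Psi$ with $m_\Psi=\Theta$ to which $\Psi_p$ converges vaguely, hence pointwise at every continuity point of $\Psi$. Since the weights $\tilde v_i=\mbox{\bf u}_i'\bSig_{T-h,T}\mbox{\bf u}_i$ are uniformly bounded by (A.ii)--(A.iii) and the limit law $F$ of Theorem \ref{thm:limit_loss} has no atom on $(0,\infty)$, $\Psi$ has no atom on $(0,\infty)$; together with the trivial case $x<0$ this gives $\Psi_p(x)\to\Psi(x)$ for all $x\neq0$, which is the first (representation-free) assertion and holds for every $y\in(0,\infty)$.

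To identify $\Theta$ I would use the structure of class $\mathcal C$ under (A.i). Conditionally on the path $\gamma$ and on the (independent, by (A.vi)) sampling times, the increments are Gaussian, $\Delta\mbox{\bf X}_k=\sigma_k\pmb\Lambda\mbox{\bf z}_k$ with $\sigma_k^2=\int_{\tau_{k-1}}^{\tau_k}\gamma_t^2\,dt$ and $\{\mbox{\bf z}_k\}_{k=1}^n$ i.i.d.\ $\mathrm N(\mbox{\bf 0},\mbox{\bf I}_p)$ independent of $\{\sigma_k\}$. The per-increment self-normalization in \eqref{S_TVA} cancels each $\sigma_k$, so $\breve{\mbox{\bf S}}_{T-h,T}=\frac pn\sum_{k=1}^n\frac{\pmb\Lambda\mbox{\bf z}_k\mbox{\bf z}_k'\pmb\Lambda'}{|\pmb\Lambda\mbox{\bf z}_k|^2}$, while the scalar prefactor of \eqref{S_TVA} is $p^{-1}\sum_k\sigma_k^2|\pmb\Lambda\mbox{\bf z}_k|^2\to\theta$ by (A.iv) and the concentration $|\pmb\Lambda\mbox{\bf z}_k|^2/p\to1$ (which uses (A.iii) and $\mathrm{tr}(\pmb\Lambda\pmb\Lambda')=p$). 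Following Zheng and Li (2011), I would argue that swapping each random weight $|\pmb\Lambda\mbox{\bf z}_k|^2$ for $p$ and the prefactor for $\theta$ leaves the almost sure limit of $m_{\Psi_p}(z)$ unchanged, so that $\Theta(z)$ equals the limit of $\frac1p\mathrm{tr}\bigl(\bSig_{T-h,T}(\widehat{\mbox{\bf S}}-z\mbox{\bf I})^{-1}\bigr)$ for the ordinary sample covariance $\widehat{\mbox{\bf S}}=\frac1n\sum_{k=1}^n\bSig_{T-h,T}^{1/2}\mbox{\bf z}_k\mbox{\bf z}_k'\bSig_{T-h,T}^{1/2}$, whose population spectral distribution converges to $H$ by (A.v) and which shares the LSD $F$ with $\mbox{\bf S}_{T-h,T}^{\mathrm{TVA}}$.

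For this last matrix, Ledoit and P\`ech\`e (2011) (via Bai--Silverstein deterministic-equivalent theory) supply the almost sure limit of the $\bSig_{T-h,T}$-weighted resolvent in terms of the Mar\v{c}enko--Pastur self-consistent equations. Introducing the companion transform $\underline s_F(z)=-(1-y)z^{-1}+y\,s_F(z)$, which yields the identity $-z\,\underline s_F(z)=1-y-yz\,s_F(z)$, one can rewrite this limit as $\Theta(z)=\int\frac{\delta(v)}{v-z}\,dF(v)$ with $\delta(v)=v/|1-y-yv\,\breve m_F(v)|^{2}$, where $\breve m_F(v)=\lim_{z\in\mathbb{C}^{+}\rightarrow v}s_F(z)$ is the boundary value of $s_F$ (which exists and is continuous off the finite support endpoints by Bai--Silverstein regularity theory). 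Stieltjes inversion then gives $d\Psi=\delta\,dF$, i.e.\ \eqref{eqn_psi}. The hypothesis $y\neq1$ is used only here: when $y=1$ the spectrum of $\mbox{\bf S}_{T-h,T}^{\mathrm{TVA}}$ accumulates at the origin, $z\,\underline s_F(z)$ degenerates there, and the closed form of $\delta$ (and its $F$-integrability near $0$) fails, even though $\Psi$ itself still exists by the first paragraph.

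The main obstacle will be the transfer in the second paragraph. Ledoit--P\`ech\`e is stated for the ordinary sample covariance matrix, whereas $\mbox{\bf S}_{T-h,T}^{\mathrm{TVA}}$ carries the random per-increment weights $|\pmb\Lambda\mbox{\bf z}_k|^2$, and Zheng and Li (2011) only established convergence of the ESD, not of the eigenvector-weighted functional $\frac1p\mathrm{tr}(\bSig_{T-h,T}(\mbox{\bf S}_{T-h,T}^{\mathrm{TVA}}-z\mbox{\bf I})^{-1})$. Closing this gap needs either a perturbation estimate showing the difference of this functional and its $\widehat{\mbox{\bf S}}$-analogue is $o(1)$ a.s.\ --- controlling, via the resolvent identity and (A.ii)--(A.iii), the rank-one updates in which each $|\pmb\Lambda\mbox{\bf z}_k|^2$ is replaced by $p$ --- or redoing the martingale-difference resolvent expansion over $k=1,\dots,n$ directly for $\breve{\mbox{\bf S}}_{T-h,T}$ with the normalizations present and checking that the extra fluctuation terms vanish. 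A minor additional point is the boundary passage $z\to v>0$ needed to express $\delta$ through $\breve m_F(v)$, which again follows from the standard regularity of $s_F$ near its support.
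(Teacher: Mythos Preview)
Your proposal is correct and follows essentially the same route as the paper: pass to the Stieltjes transform $p^{-1}\mathrm{tr}\bigl(\bSig_{T-h,T}(\mbox{\bf S}_{T-h,T}^{\mathrm{TVA}}-z\mbox{\bf I})^{-1}\bigr)$, invoke Ledoit--P\'ech\'e (2011) for the i.i.d.\ surrogate $\widehat{\mbox{\bf S}}$, and show the two transforms differ by $o(1)$ a.s. The paper resolves what you flag as the main obstacle via your first option, but globally rather than through rank-one updates: a single application of the resolvent identity to $\mbox{\bf S}_{T-h,T}^{\mathrm{TVA}}-\widehat{\mbox{\bf S}}$, with the resulting terms bounded using $\max_{1\le k\le n}\bigl|p^{-1}\mbox{\bf z}_k'\breve{\bSig}\mbox{\bf z}_k-1\bigr|\to0$ a.s.\ (sixth-moment bound from Bai--Silverstein (1998, Lemma~2.7) plus Borel--Cantelli) and the a.s.\ convergence of the scalar prefactor $p^{-1}\mathrm{tr}(\widehat{\bSig}_{T-h,T}^{\mathrm{RCV}})\to\int_{T-h}^T\gamma_t^2\,dt$.
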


\begin{rem}
Theorem \ref{thm:LSD_d} extends the result in Theorem 4 of Ledoit and
P\`ech\`e (2011) from the IID case to Class $\mathcal{C}$.
\end{rem}

Theorem \ref{thm:LSD_d} implies that the asymptotic quantity that
corresponds to $\tilde{v}_{i}={\mbox{\bf u}}_{i}^{\prime }%
\mbox{\boldmath
	$\Sigma$}_{T-h,T}{\mbox{\bf u}}_{i}$ is $\delta (v)$ provided that $v$
corresponds to $v_{i}$. An interesting finding is that the results of Lemma %
\ref{lem:func_d} and Theorem \ref{thm:LSD_d} are consistent with each other,
even though they are motivated from two different perspectives. Given that
it is much easier to work on the minimization problem in (\ref{sol of min
fro}), we recommend to regularize the eigenvalues of $\mbox{\bf S}_{T-h,T}^{%
\mathrm{TVA}}$ by using (\ref{sol of min fro}), which is to find a good
estimator for each $\tilde{v}_{i}={\mbox{\bf u}}_{i}^{\prime }%
\mbox{\boldmath
		$\Sigma$}_{T-h,T}{\mbox{\bf u}}_{i}$ with $i=1,...,p$ .

% The above problem also happens in the  estimation of the population matrix with iid samples. Lam (2015) has developed a quite good method to solve this problem.  Suppose we have iid samples $\bX_{n\times p}$, the population matrix is denoted by $\bSig_{p\times p}$. Lam (2015) proposed to split the data into two groups $(\bX_1)_{m\times p}$ and $(\bX_2)_{(n-m)\times p}$. Suppose the sample covariance matrices of $\bX_1$ and $\bX_2$---$\bS_1$ and $\bS_2$ have eigen-decomposition $\bS_1=\bQ_1\bR_1\bQ_1^{\prime}$ and $\bS_2=\bQ_2\bR_2\bQ_2^{\prime}$, where $\bQ_1=(\bq_{1i}, ..., \bq_{pi})$. Lam (2015) then used the fact that $\bQ_1$ and $\bR_2$ are independent to regularize the eigenvalues and show that if $\bQ_1$ and $\bS_2$ are independent, then for each $i=1, ..., p$, $\bq_{1i}^{\prime}\bS_2\bq_{1i}$ is asymptotically equivalent to  $\bq_{1i}^{\prime}\bSig_{p\times p}\bq_{1i}$, which is similar to our $\tilde{d}_i$ in (\ref{sol of min fro}).

\subsection{Regularized estimators of eigenvalues of $\mbox{\bf S}_{T-h,T}^{%
\mathrm{TVA}}$} \label{Regularized estimators of eigenvalues}

\label{Estimating the eigenvalues}

Note that $\tilde{v}_{i}=\mbox{\bf u}_{i}^{\prime }\mbox{\boldmath $\Sigma$}%
_{T-h,T}\mbox{\bf u}_{i}$ is actually the integrated volatility of process $%
\mbox{\bf u}_{i}^{\prime }\mbox{\bf X}_{t}$ over $[T-h, T]$ for $%
i=1,2,\cdots ,p$. A natural estimator of each $\tilde{v}_{i}$ is the
realized volatility $\sum_{k=1}^{n}(\mbox{\bf u}_{i}^{\prime }\Delta %
\mbox{\bf X}_{k})^{2}$. Unfortunately, this is not a good idea. To see the
problem, note that
\begin{equation*}
\widehat{\mbox{\boldmath $\Sigma$}}_{T-h,T}^{\ast \ast }=\mbox{\bf U}{%
\mathrm{diag}}\left( \sum_{k=1}^{n}(\mbox{\bf u}_{1}^{\prime }\Delta %
\mbox{\bf X}_{k})^{2},...,\sum_{k=1}^{n}(\mbox{\bf u}_{p}^{\prime }\Delta %
\mbox{\bf X}_{k})^{2}\right) \mbox{\bf U}^{\prime }.
\end{equation*}%
Let us consider the simplest case where $\gamma _{t}=1$, $\bm\Lambda =%
\mbox{\bf I}_{p}$ with $\mbox{\bf I}_{p}$ be a $p$-dimensional identity
matrix, and $\tau _{k}-\tau _{k-1}=\frac{h}{n}$ for $k=1,...,n$. We can
write $\Delta \mbox{\bf X}_{k}=\left( \frac{h}{n}\right) ^{1/2}\mbox{\bf Z}%
_{k}$ with $\mbox{\bf Z}_{k}$'s are IID $p$-dimensional standard normals
such that $\dfrac{\Delta \mbox{\bf X}_{{k}}\Delta \mbox{\bf X}_{{k}}^{\prime
}}{|\Delta \mbox{\bf X}_{k}|^{2}}=\frac{\mbox{\bf Z}_{k}\mbox{\bf Z}%
_{k}^{\prime }}{|\mbox{\bf Z}_{k}|^{2}}$. Since $|\mbox{\bf Z}_{k}|^{2}\sim
p $ as $p\rightarrow \infty $, we have
\begin{align*}
\mbox{\bf S}_{T-h,T}^{\mathrm{TVA}}& =\dfrac{{\mathrm{tr}}\left( {\
\sum_{k=1}^{n}\Delta \mbox{\bf X}_{k}\Delta \mbox{\bf X}_{k}^{\prime }}%
\right) }{p}\dfrac{p}{n}\sum_{k=1}^{n}\dfrac{\Delta \mbox{\bf X}_{{k}}\Delta %
\mbox{\bf X}_{{k}}^{\prime }}{|\Delta \mbox{\bf X}_{k}|^{2}} \\
& \sim \dfrac{{\mathrm{tr}}\left( {\ \sum_{k=1}^{n}\Delta \mbox{\bf X}%
_{k}\Delta \mbox{\bf X}_{k}^{\prime }}\right) }{p}\dfrac{1}{n}\sum_{k=1}^{n}%
\mbox{\bf Z}_{k}\mbox{\bf Z}_{k}^{\prime }, \\
\sum_{k=1}^{n}{\Delta \mbox{\bf X}_{{k}}\Delta \mbox{\bf X}_{{k}}^{\prime }}%
& =\frac{h}{n}\sum_{k=1}^{n}\mbox{\bf Z}_{k}\mbox{\bf Z}_{k}^{\prime }.
\end{align*}%
By denoting $\Delta \mbox{\bf X}=(\Delta \mbox{\bf X}_{1},...,\Delta %
\mbox{\bf X}_{n})^{\prime }$, we have
\begin{align*}
\widehat{\mbox{\boldmath $\Sigma$}}_{T-h,T}^{\ast \ast }& =\mbox{\bf U}%
\mathrm{diag}\left( \sum_{k=1}^{n}(\mbox{\bf u}_{1}^{\prime }\Delta %
\mbox{\bf X}_{k})^{2},...,\sum_{k=1}^{n}(\mbox{\bf u}_{p}^{\prime }\Delta %
\mbox{\bf X}_{k})^{2}\right) \mbox{\bf U}^{\prime } \\
& =\mbox{\bf U}\mathrm{diag}\left( \mbox{\bf u}_{1}^{\prime }\Delta %
\mbox{\bf X}\Delta \mbox{\bf X}^{\prime }\mbox{\bf u}_{1},...,\mbox{\bf u}%
_{p}^{\prime }\Delta \mbox{\bf X}\Delta \mbox{\bf X}^{\prime }\mbox{\bf u}%
_{p}\right) \mbox{\bf U}^{\prime } \\
& =\mbox{\bf U}\mathrm{diag}\left( \mbox{\bf U}^{\prime }\Delta \mbox{\bf X}%
\Delta \mbox{\bf X}^{\prime }\mbox{\bf U}\right) \mbox{\bf U}^{\prime } \\
& \sim \Delta \mbox{\bf X}\Delta \mbox{\bf X}^{\prime },
\end{align*}%
which is actually the sample covariance matrix of IID samples generated from
$N(\bm0,h\mbox{\bf I}_{p})$. Hence, its eigenvalues are also more spread out
than that of $h\mbox{\bf I}_{p}$, a well-known result in the literature.

To solve this problem, we use the idea from Abadir et al. (2014) and Lam
(2016) by splitting the sample into two parts. We use the estimated
eigenvectors from a fraction of the data to transform the data into
approximately orthogonal series.\footnote{%
Strictly speaking, the asymptotic justification of the method requires the
IID assumption as shown in Lam  (2016). While the IID assumption does not
hold for Class $\mathcal{C}$, we examine the effectiveness of this method
using real data later.} We then use the independence of two sample
covariance matrices to regularize the eigenvalues of one of them. Therefore,
instead of based $\mbox{\bf U}$ on $\Delta \mbox{\bf X}_{k}=\mbox{\bf X}%
_{\tau _{k}}-\mbox{\bf
X}_{\tau _{k-1}}~(k=1,...,n)$ for $T-h:=\tau _{0}<...<\tau _{n}:=T$, we base
$\mbox{\bf U}^{\ast }$ on $\Delta \mbox{\bf X}_{r}^{\ast }=\mbox{\bf X}%
_{\tau _{r}^{\ast }}-\mbox{\bf X}_{\tau _{r-1}^{\ast }}(r=1,...,m)$ for
\begin{equation*}
0:=\tau _{0}^{\ast }<\tau _{1}^{\ast }<...<\tau _{m}^{\ast }<T-h,
\end{equation*}%
where $\mbox{\bf U}^{\ast }=(\mbox{\bf u}_{1}^{\ast },...,\mbox{\bf u}%
_{p}^{\ast })$ are the eigenvectors of $\mbox{\bf S}_{0,T-h}^{\mathrm{TVA}}$
corresponding to the eigenvalues with the non-increasing order, and the TVA
realized covariance matrix
\begin{equation*}
\mbox{\bf S}_{0,T-h}^{\mathrm{TVA}}=\dfrac{{\mathrm{tr}}\left\{ {\
\sum_{r=1}^{m}\Delta \mbox{\bf X}_{r}^{\ast }\left( \Delta \mbox{\bf X}%
_{r}^{\ast }\right) ^{\prime }}\right\} }{p}\cdot \breve{\mbox{\bf S}}%
_{0,T-h},~~\mathrm{with}~~\breve{\mbox{\bf S}}_{0,T-h}=\dfrac{p}{m}%
\sum_{r=1}^{m}\dfrac{\Delta \mbox{\bf X}_{r}^{\ast }\left( \Delta
\mbox{\bf
X}_{r}^{\ast }\right) ^{\prime }}{|\Delta \mbox{\bf X}_{r}^{\ast }|^{2}}.
\end{equation*}%
In addition, since the eigenvectors of $\mbox{\boldmath $\Sigma$}_{t}$ is
assumed to be time invariant, we also consider the following optimization
problem
\begin{equation*}
\min_{{\mbox{\bf V}}^{\ast }~\mathrm{\ diagonal}}\Vert \mbox{\bf U}^{\ast }{%
\mbox{\bf V}}^{\ast }\left( \mbox{\bf U}^{\ast }\right) ^{\prime }-%
\mbox{\boldmath $\Sigma$}_{T-h,T}\Vert _{F},
\end{equation*}%
and estimate each diagonal element of the oracle minimizer ${\mbox{\bf V}}%
^{\ast }={\mathrm{diag}}({v}_{1}^{\ast },...,{v}_{p}^{\ast })$ with ${v}%
_{i}^{\ast }=\left( \mbox{\bf
u}_{i}^{\ast }\right) ^{\prime }\mbox{\boldmath $\Sigma$}_{T-h,T}\mbox{\bf u}%
_{i}^{\ast }$ based on the data over the time period $[T-h,T]$. To get an
accurate estimator for each ${v}_{i}^{\ast }$ with $i\in \{1,...,p\}$, we
propose to use all the tick-by-tick high frequency data and take into
account with the microstructure noises.

Let us first consider the case that the data are synchronous and equally
recorded at time points $\{T-h:=t_{0}^{\ast }<t_{1}^{\ast }<\cdots
<t_{N}^{\ast }:=T\}$, where the time interval ${\Delta }=t_{j}^{\ast
}-t_{j-1}^{\ast }\rightarrow 0$ for all $j=1,...,N$ as $N\rightarrow \infty $
and $h$ fixed. Notice that here $\{t_{j}^{\ast }:j=0,...,N\}$ may be quite
different from $\{\tau _{k}:k=0,...,n\}$ and ${\Delta }$ can be one second
or a few seconds, and should be much smaller than $\tau _{k}-\tau _{k-1}$
which is 15 minutes.

We assume each observation is contaminated by microstructure noise such that
$\mbox{\bf Y}_{t}=(Y_{1t}, ..., Y_{pt})^{\prime }$ (observed) contains the
true log-price $\mbox{\bf X}_{t}$ (latent) and the microstructure noise ${\bm%
\epsilon }_{t}=(\epsilon _{1t},\dots ,\epsilon _{pt})^{\prime }$ in an
additive form
\begin{equation}
\mbox{\bf Y}_{t}=\mbox{\bf X}_{t}+\bm\epsilon _{t},\text{ for }t\in \lbrack
T-h,T],  \label{observed and latent}
\end{equation}%
where the $p$-dimensional noise ${\bm\epsilon }_{t}$ is assumed to satisfy

\begin{assumption}
The $p$-dimensional noise ${\bm\epsilon }_{t}=(\epsilon _{1t},\dots
,\epsilon _{pt})^{\prime }$ at different time points $t=t_{0}^{\ast
},t_{1}^{\ast },\cdots ,t_{N}^{\ast }$ are IID random vectors with mean $\bm%
0 $ (a $p$-dimensional vector with all elements being 0), positive definite
covariance matrix $\mbox{\bf A}_{0}$ and finite fourth moment. In addition, $%
{\bm\epsilon }_{t}$ and $\mbox{\bf X}_{t}$ are mutually independent.
\end{assumption}

This assumption has commonly been used in the literature; see, for example, A%
\"{\i}t-Sahalia et al. (2010), Zhang (2011), Liu and Tang (2014). To
estimate $(\mbox{\bf u}_{i}^{\ast })^{\prime }\mbox{\boldmath
$\Sigma$}_{T-h,T}\mbox{\bf u}_{i}^{\ast }$, we apply the quasi-maximum
likelihood (QML) approach developed in Xiu (2010). Based on (\ref{eqref})
and (\ref{observed and latent}), we have
\begin{align}
\tilde{Y}_{it}& =(\mbox{\bf u}_{i}^{\ast })^{\prime }\mbox{\bf Y}_{t}=(%
\mbox{\bf u}_{i}^{\ast })^{\prime }\mbox{\bf X}_{t}+(\mbox{\bf u}_{i}^{\ast
})^{\prime }{\bm\epsilon }_{t}=\tilde{X}_{it}+\tilde{\epsilon}_{it}  \notag
\\
d\tilde{X}_{it}& =(\mbox{\bf u}_{i}^{\ast })^{\prime }d\mbox{\bf X}_{t}=(%
\mbox{\bf
u}_{i}^{\ast })^{\prime }\bm{\mu}_{t}dt+(\mbox{\bf u}_{i}^{\ast })^{\prime }%
\pmb\Theta _{t}d\mbox{\bf B}_{t}=\tilde{{\mu }}_{it}dt+\tilde{\sigma}_{it}d%
\tilde{B}_{it}  \label{model for QML}
\end{align}%
by letting
\begin{align*}
\tilde{X}_{it}& =(\mbox{\bf u}_{i}^{\ast })^{\prime }\mbox{\bf X}%
_{t},~~~~~~~~~~~~~~~~\tilde{\epsilon}_{it}=(\mbox{\bf u}_{i}^{\ast
})^{\prime }{\bm\epsilon }_{t},~~~~~~~~~~~~~~\tilde{{\mu }}_{it}=(%
\mbox{\bf
u}_{i}^{\ast })^{\prime }\bm{\mu}_{t}, \\
\tilde{\sigma}_{it}d\tilde{B}_{it}& =(\mbox{\bf u}_{i}^{\ast })^{\prime }\pmb%
\Theta _{t}d\mbox{\bf B}_{t},~~\tilde{\sigma}_{it}^{2}=(\mbox{\bf u}%
_{i}^{\ast })^{\prime }\pmb\Theta _{t}((\mbox{\bf u}_{i}^{\ast })^{\prime }%
\pmb\Theta _{t})^{\prime }=(\mbox{\bf u}_{i}^{\ast })^{\prime }\pmb\Theta
_{t}\pmb\Theta _{t}^{\prime }\mbox{\bf u}_{i}^{\ast }=(\mbox{\bf u}%
_{i}^{\ast })^{\prime }\mbox{\boldmath $\Sigma$}_{t}\mbox{\bf u}_{i}^{\ast },
\end{align*}%
such that $v_{i}^{\ast }=\int_{T-h}^{T}\tilde{\sigma}_{it}^{2}dt$.

Ignoring the impact of $\tilde{\mu}_{it}dt$ by considering $\tilde{\mu}%
_{it}=0$, we follow the idea in Xiu (2010) to give two misspecified
assumptions for each $i\in \{1,...,p\}$. First, the spot volatility is
assumed to be time invariant:  $\tilde{\sigma}_{it}^{2}=(\mbox{\bf u}%
_{i}^{\ast })^{\prime }\mbox{\boldmath $\Sigma$}_{t}\mbox{\bf u}_{i}^{\ast }=%
\tilde{\sigma}_{i}^{2}$. Second, the noise $\tilde{\epsilon}_{it}$ is
assumed to be normally distributed with mean 0 and variance $\tilde{a}%
_{i}^{2}$. Then the quasi-log likelihood function for $\tilde{Y}%
_{i,t_{j}^{\ast }}-\tilde{Y}_{i,t_{j-1}^{\ast }}$ is
\begin{equation}
l(\tilde{\sigma}_{i}^{2},\tilde{a}_{i}^{2})=-\frac{1}{2}{\log }~\mathrm{det}(%
\mathbf{\Omega }^{\ast })-\frac{Np}{2}{\log }(2\pi )-\frac{1}{2}\left(
\widetilde{\mbox{\bf Y}}_{i}^{\ast }\right) ^{\prime }(%
\mbox{\boldmath
$\Omega$}^{\ast })^{-1}\left( \widetilde{\mbox{\bf Y}}_{i}^{\ast }\right)
\label{qmlf-xiu}
\end{equation}%
where $\mathbf{\Omega }^{\ast }$ is a tridiagonal matrix with the diagonal
elements being $\tilde{\sigma}_{i}^{2}{\Delta }+2\tilde{a}_{i}^{2}$ and the
tridiagonal elements being $-\tilde{a}_{i}^{2}$, $\tilde{\mbox{\bf Y}}%
_{i}^{\ast }=\left( \tilde{Y}_{i,t_{1}^{\ast }}-\tilde{Y}_{i,t_{0}^{\ast
}},...,\tilde{Y}_{i,t_{N}^{\ast }}-\tilde{Y}_{i,t_{N-1}^{\ast }}\right)
^{\prime }$. The QML estimator of $\left( \int_{T-h}^{T}\tilde{\sigma}%
_{it}^{2}dt,(\mbox{\bf u}_{i}^{\ast })^{\prime }\mbox{\bf A}_{0}\mbox{\bf u}%
_{i}^{\ast }\right) $ is the value of $(\tilde{\sigma}_{i}^{2},\tilde{a}%
_{i}^{2})$ which maximizes $l(\tilde{\sigma}_{i}^{2},\tilde{a}_{i}^{2})$. We
denote the estimator of ${v}_{i}^{\ast }=\int_{T-h}^{T}\tilde{\sigma}%
_{it}^{2}dt$ by $\hat{v}_{i}^{\ast }$, which is positive. Xiu (2010) proved
that $\hat{v}_{i}^{\ast }$ is consistent and asymptotically efficient for $%
\int_{T-h}^{T}\tilde{\sigma}_{it}^{2}dt$.

\begin{rem}
As discussed in Xiu (2010), if $(t_{j}^{\ast }-t_{j-1}^{\ast })$s for $%
j=1,...,N$ are random and IID, we can add another misspecified assumption
that they are equal. We then apply the above approach to get $\hat{v}%
_{i}^{\ast }$ which is also a consistent estimator of $(\mbox{\bf u}%
_{i}^{\ast })^{\prime }\mbox{\boldmath $\Sigma$}_{T-h,T}\mbox{\bf u}%
_{i}^{\ast }$. Since the tick-by-tick data over the time period $[T-h,T]$ is
typically non-synchronous, we propose to first synchronize data by the
refresh time scheme of Barndorff-Nielsen et al. (2011) and then apply the
QML procedure to obtain $\hat{v}_{i}^{\ast }~(i=1,\cdots ,p)$. The first
refresh time $t_{0}^{\ast }$ during a trading day is the first time when all
assets have been traded at least once since $T-h$. The second refresh time $%
t_{1}^{\ast }$ is the first time when all assets have been traded at least
once since the first refresh point in time $t_{0}^{\ast }$. Repeating this
sequence yields in total $N+1$ refresh times, $t_{0}^{\ast },t_{1}^{\ast
},...,t_{N}^{\ast }$, and corresponding $N+1$ sets of synchronized refresh
prices $\mathbf{Y}_{t_{0}^{\ast }},\mathbf{Y}_{t_{1}^{\ast }},...,\mathbf{Y}%
_{t_{N}^{\ast }}$ with each $Y_{i,t_{j}^{\ast }}~~(i=1,...,p;j=0,1,...,N)$
being the log-price of the $i$th asset nearest to and previous to $%
t_{j}^{\ast }$. Barndorff-Nielsen et al. (2011) showed that if the trading
time of $p$ assets arrive as independent standard Poisson processes with
common intensity $\lambda $ such that the mean of trading frequency of each
asset over $[T-h,T]$ is $\lambda h$, then the synchronized data obtained by
the refresh time scheme is $\lambda h/\log p$. Based on this observation, if
each of 100 (or1,000) assets have around 20,000 observations within a
trading day, then the number of synchronized observations is around 4,342
(or 2,895). While this sampling strategy loses around 78.3\% or 85.5\% of
observations, it keeps much more data than the sparsely sampling technique
at every 15 minutes, where the size is only 26 within a trading day.
\end{rem}

Therefore, our shrinkage QML estimators for $\mbox{\boldmath $\Sigma$}%
_{T-h,T}$ and $\mbox{\boldmath $\Sigma$}_{T-h,T}^{-1}$ are, respectively,
\begin{equation}
\widehat{\mbox{\boldmath $\Sigma$}}_{T-h,T}=\mbox{\bf U}^{\ast }\mathrm{\
diag}(\hat{v}_{1}^{\ast },...,\hat{v}_{p}^{\ast })\left( \mbox{\bf U}^{\ast
}\right) ^{\prime },~~~\widehat{\mbox{\boldmath $\Sigma$}_{T-h,T}^{-1}}=%
\mbox{\bf U}^{\ast }\mathrm{diag}\left\{ (\hat{v}_{1}^{\ast })^{-1},...,(%
\hat{v}_{p}^{\ast })^{-1}\right\} \left( \mbox{\bf
U}^{\ast }\right) ^{\prime },  \label{SQMLE}
\end{equation}%
and our estimated optimal weight $\hat{\mbox{\bf w}}_{T}$ is obtained by
replacing $\mbox{\boldmath $\Sigma$}_{T-h,T}^{-1}$ in (\ref%
{min_risk_optimal_weightB}) with $\widehat{\mbox{\boldmath $\Sigma$}%
_{T-h,T}^{-1}}$,
\begin{equation}
\hat{\mbox{\bf w}}_{T}=\frac{\widehat{\mbox{\boldmath $\Sigma$}_{T-h,T}^{-1}}%
{\bm1}}{{\bm1}^{\prime }\widehat{\mbox{\boldmath $\Sigma$}_{T-h,T}^{-1}}{\bm1%
}}.  \label{QML Weight}
\end{equation}%
Notice that like $\mbox{\bf U}$, $\mbox{\bf U}^{\ast }$ cannot be obtained
directly from observations. We therefore approximate $\mbox{\bf U}^{\ast }$
by the eigenvectors of
\begin{equation*}
\widetilde{\mbox{\bf S}}_{0,T-h}^{\mathrm{TVA}}=\dfrac{{\mathrm{tr}}\left\{ {%
\ \sum_{r=1}^{m}\Delta \mbox{\bf Y}_{{r}}^{\ast }\left( \Delta \mbox{\bf Y}_{%
{r}}^{\ast }\right) ^{\prime }}\right\} }{m}\sum_{r=1}^{m}\dfrac{\Delta %
\mbox{\bf Y}_{{r}}^{\ast }\left( \Delta \mbox{\bf Y}_{{r}}^{\ast }\right)
^{\prime }}{|\Delta \mbox{\bf Y}_{{r}}^{\ast }|^{2}},
\end{equation*}%
where $\Delta \mbox{\bf Y}_{r}^{\ast }=\mbox{\bf Y}_{\tau _{r}^{\ast }}-%
\mbox{\bf Y}_{\tau _{r-1}^{\ast }}~(r=1,...,m)$, and $\mbox{\bf Y}_{\tau
_{r}^{\ast }}$'s are the log-prices obtained by synchronizing all the trading
prices of $p$ assets during $[0,T-h)$ via the previous tick method.

\section{Empirical Studies}

\label{Empirical Study} In this section, we demonstrate the performance of
our proposed method using real data. Three portfolio sizes are considered ($%
p=30,40$ and $50$) based on stocks traded in the U.S. markets. These
portfolios are 30 Dow Jones Industrial Average (30 DJIA) constituent stocks,
30 DJIA stocks and 10 stocks with the largest market caps (ranked on March
30, 2012) from S\&P 500 other than 30 DJIA stocks, 30 DJIA stocks and 20
stocks with the largest market caps from S\&P 500 other than 30 DJIA stocks.
We download daily data starting from March 19, 2012 and ending on December
31, 2013 (450 trading days) from the Center for Research in Security Prices
(CRSP) and 200 days intra-day data staring on March 19, 2013 and ending on
December 31, 2013 from the TAQ database. The daily data are used to
implement some existing methods in the literature for the purpose of
comparison. For the high frequency data, the same data cleaning procedure as
in Barndorff-Nielsen et al. (2011) is applied to pre-process the data by 1)
deleting entries that have 0 or negative prices, 2) deleting entries with
negative values in the column of \textquotedblleft Correlation
Indicator\textquotedblright , 3) deleting entries with a letter code in the
column of \textquotedblleft COND\textquotedblright , except for
\textquotedblleft E\textquotedblright\ or \textquotedblleft
F\textquotedblright , 4) deleting entries outside the period 9:30 a.m. to 4
p.m., and 5) using the median price if there are multiple entries at the
same time.

\subsection{Summary of the proposed method}

Given that, in the empirical applications, the basic unit is daily, we can
summarize the proposed method as follows. Suppose we want to construct a
portfolio strategy at the end of the $J$th day (which is denoted $T$ in
previous sections) based on a pool of $p$ assets with a holding period of $%
\breve{J}$ days. We use the ICV in the most recent $J-J_{1}$ days (which is
denoted $[T-h,T]$ in previous sections) multiplied by $\frac{\breve{J}}{%
J-J_{1}}$ to approximate the expected ICV during the holding period.\newline
\textbf{Step 1:} Split data of $J$ days into two parts. The first part
contains data of first $J_{1}$ days, recorded as the $1$st, ..., $J_{1}$th
days. The rest of data of $J-J_{1}$ days belong to the second part.\newline
\textbf{Step 2:} Synchronize data in the $l$th day for each $l\in
\{1,...,J_{1}\}$ using the previous tick method at the 15-minute interval.
Denote the log-price at the 15-minute frequency by $\mbox{\bf Y}_{0},%
\mbox{\bf Y}_{1},...,\mbox{\bf Y}_{m}$.\newline
\textbf{Step 3:} Synchronize the data in $l$th day for each $l\in
\{J_{1}+1,...,J\}$ using the refresh time scheme to obtain synchronous data
and denote the log-price by $\mbox{\bf Y}_{\cdot 0}^{l\ast },\mbox{\bf Y}%
_{\cdot 1}^{l\ast },...,\mbox{\bf Y}_{\cdot n_{l}}^{l\ast }$ for each $l\in
\{J_{1}+1,...,J\}$.\newline
\textbf{Step 4: } Obtain the eigenvectors of $\dfrac{{\mathrm{tr}}\left( {\
\sum_{k=1}^{m}\Delta \mbox{\bf Y}_{{k}}\Delta \mbox{\bf Y}_{{k}}^{\prime }}%
\right) }{m}\sum_{k=1}^{m}\dfrac{\Delta \mbox{\bf Y}_{{k}}\Delta \mbox{\bf Y}%
_{{k}}^{\prime }}{|\Delta \mbox{\bf Y}_{k}|^{2}}$ (the corresponding
eigenvalues are sorted in the non-increasing order), and put them together
as a $p\times p$ matrix which is denoted by $\mbox{\bf U}^{\ast }$. Here $%
\Delta \mbox{\bf Y}_{{k}}=\mbox{\bf Y}_{k}-\mbox{\bf Y}_{k-1}$. \newline
\textbf{Step 5: } Obtain $\widetilde{\mbox{\bf Y}}_{\cdot j}^{l\ast }=\left( %
\mbox{\bf U}^{\ast }\right) ^{\prime }\mbox{\bf Y}_{\cdot j}^{l\ast }$ for $%
l=J_{1}+1,...,J,~j=1,...,n_{l}$. Estimate the integrated volatility of the $%
i $th element of $\left( \mbox{\bf U}^{\ast }\right) ^{\prime }\mbox{\bf X}%
_{t} $ during the $l$th day by QML that maximizes (\ref{qmlf-xiu}) with $%
\widetilde{\mbox{\bf Y}}_{i}^{\ast }$ being replaced by $\widetilde{%
\mbox{\bf Y}}_{i\cdot }^{l\ast }=\left( \tilde{Y}_{i1}^{l\ast },...,\tilde{Y}%
_{i,n_{l}}^{l\ast }\right) $ and with $\tilde{Y}_{ij}^{l\ast }$ being the $i$%
th element of $\widetilde{\mbox{\bf Y}}_{\cdot j}^{l\ast }$. Denote the
estimator by $\hat{v}_{i}^{l\ast }$.\newline
\textbf{Step 6: } The SQML estimator of the ICV in the $l$th day is defined
as $\mbox{\bf U}^{\ast }{\mathrm{diag}}(\hat{v}_{1}^{l\ast },...,\hat{v}%
_{p}^{l\ast })\left( \mbox{\bf U}^{\ast }\right) ^{\prime }$. We then use $%
\frac{\breve{J}}{J-J_{1}}\sum_{l=J_{1}+1}^{J}\mbox{\bf U}^{\ast }{\mathrm{%
diag}}(\hat{v}_{1}^{l\ast },...,\hat{v}_{p}^{l\ast })\left( \mbox{\bf U}%
^{\ast }\right) ^{\prime }$ to approximate the expected ICV during the
holding period, and its inverse to approximate $\widehat{%
\mbox{\boldmath
$\Sigma$}_{T-h,T}^{-1}}$ in (\ref{QML Weight}) to get the estimated optimal
weight.

For the purpose of comparison, we consider two different $\mbox{\bf U}^{\ast
}$s. We denote the two different SQML estimators by SQrM if $\mbox{\bf U}%
^{\ast }$ in Step 4 is obtained from 15-minute intra-day data and SQrD if $%
\mbox{\bf Y}_{0},...,\mbox{\bf Y}_{m}$ are the daily closing log-prices.

\subsection{ The GMV portfolio}

We first consider the GMV portfolio problem (\ref{gmv}) whose theoretical
optimal weight is chosen by (\ref{min_risk_optimal_weight}). Following the
choice of many practitioners, we apply the plug-in method to estimate the
optimal weight and replace $\widetilde{\mbox{\boldmath $\Sigma$}}_{T,T+\tau
}^{-1}$ by its approximation, $\frac{h}{\tau }\widehat{%
\mbox{\boldmath
$\Sigma$}_{T-h,T}^{-1}}$ with different $h$s. We refer to Brandt (2010) for
a review of the impacts of a plug-in method in portfolio choice.

We compare the out-of-sample performance of our proposed method with some
other methods in the literature, including the equal weight (denoted by EW),
the weight estimated by plugging in the optimal linear shrinkage of the
sample covariance matrix (denoted by LS), the weight derived by the
procedure suggested in Fan, Li and Yu (2012) (denoted by TS). After the
weights are determined, the portfolios are constructed accordingly.

LS is obtained by replacing $\widetilde{ \mbox{\boldmath $\Sigma$}}%
_{T,T+\tau }^{-1}$ in (\ref{min_risk_optimal_weight}) with the inverse of
the linear shrinkage estimator
\begin{equation*}
\widehat{\mbox{\bf S}}_{LS}=(1-\kappa )\mbox{\bf S}+\kappa \bar{\lambda}%
\mbox{\bf I}_{p},
\end{equation*}%
where $\mbox{\bf S}=J_{LS}^{-1}\sum_{i=1}^{J_{LS}}(\mbox{\bf Y}_{i}-%
\mbox{\bf Y}_{i-1})(\mbox{\bf Y}_{i}-\mbox{\bf Y}_{i-1})^{\prime }=%
\mbox{\bf
Q}{\mathrm{\ diag}}(\lambda _{1},...,\lambda _{p})\mbox{\bf Q}^{\prime }$ is
the sample covariance matrix of previous $J_{LS}$ daily log-returns, $%
\lambda _{1},...,\lambda _{p}$ are the eigenvalues of $\mbox{\bf S}$, $%
\mbox{\bf Q}$ contains corresponding eigenvectors, $\bar{\lambda}%
=\sum_{i=1}^{p}\lambda _{i}/p$, and $\kappa $ is determined by the
asymptotic optimization results derived in Ledoit and Wolf (2004).

Fan, Li and Yu (2012) considered the following risk optimization problem
under gross-exposure constraints
\begin{equation}
\min \mbox{\bf w}^{\prime }\mbox{\boldmath $\Sigma$}_{T-h,T}\mbox{\bf w}%
~~~~s.t.~~~\Vert \mbox{\bf w}\Vert _{1}\leq c~~~\text{and}~~~\mbox{\bf w}%
^{\prime }\bm{1}=1,  \label{two scale portfolio}
\end{equation}%
where $\mbox{\boldmath $\Sigma$}_{T-h,T}$ was also used to approximate $%
\widetilde{\mbox{\boldmath $\Sigma$}}_{T,T+\tau }$. The pair-wise two scales
covariance (TSCV) estimator of $\mbox{\boldmath $\Sigma$}_{T-h,T}$ was
constructed based on the high frequency data synchronized by the pair-wise
refresh time scheme over previous $J_{TS}$ trading days. Since this
pair-wise estimator may not be positive semi-definite, they projected the
estimator (denoted by $\mbox{\bf M}$ here) by
\begin{equation}
\mbox{\bf M}_{1}=(\mbox{\bf M}+\lambda _{\min }^{-}\mbox{\bf I}%
_{p})/(1+\lambda _{\min }^{-}),  \label{projection of TS}
\end{equation}%
where $\lambda _{\min }^{-}$ is the negative part of the minimum eigenvalue
of the estimator $\mbox{\bf M}$. They then minimize $\mbox{\bf w}^{\prime }%
\mbox{\bf M}_{1}\mbox{\bf w}$ to obtain the optimal weight $\widehat{%
\mbox{\bf w}}$ for a given $c$. In this paper, following the simulation and
the empirical studies in Fan, Li and Yu (2012), we set $c=1.2$.

In practice, one choice that we have to make is the number of days over
which we do the estimation. For our new developed approach, we let $%
J_{1}=50,60,...,250$ when we use daily log-returns, and let $J_{1}=$ 5 (one
week), 6, ..., 21 (one month) days when we use 15-minute intra-day
log-returns in Step 4. Moreover, we choose $J-J_{1}=1,2,....,5$. The optimal
result among all possible combinations is reported. Similarly, we report the
optimal results for LS when $J_{LS}\in \{50,60,...,250\}$ and TS when $%
J_{TS}\in \{1,2,....,10\}$, and denote them by TSo, LSo respectively.

The following three measures are calculated to compare the out-of-sample
performance of all the methods during 174 investment days (we have 200 days
intra-day data in total and we use 26 days intra-day data to get SQrM), from
April 25, 2013 to December 31, 2013: (1) the average of log-returns of the
portfolio multiplied by 252 (denoted by AV); (2) the standard deviation of
log-returns of the portfolio multiplied by $\sqrt{252}$ (denoted by SD); (3)
information ratio calculated by AV/SD (denoted by IR).

In general, a high AV and a high IR with a low SD are expected for a good
portfolio. Since the GMV portfolio is designed to minimize the variance of a
portfolio, the most important performance measure for GMV is SD. Therefore,
we first compare the standard deviations of different methods and then
compare the information ratios and the average returns.

Reported in Table 1 are the AV, SD and IR for all the methods. The number in
the bold face represents the lowest SD. Several conclusions can be made from
Table 1. First and foremost, SQrM outperforms all the other strategies in
terms of SD. SQrM also achieves the highest information ratio when $p=50$.
Second, as expected, the standard deviation of the GMV portfolio decreases,
as $p$ increases from 30 to 50, for most methods. The only exception is the
EW. Third, SQrM performs better than SQrD, indicating that high frequency
data are useful in portfolio choice.

%The only two exceptions
%are the EW and the TS. Two possible reasons may explain why the standard
%deviation of TS increases when $p$ increases. The first one is that when $p$
%increases it is more likely for the element-by-element two-scale estimator
%of $\mbox{\boldmath $\Sigma$}_{T-h,T}$ not to be positive definite. The
%second one is the number of parameters in the maximizing problem (\ref{two
%	scale portfolio}) increases when $p$ increases so that the estimation error
%in $\hat{\mbox{\bf w}}$ may increase.

\subsection{Markowitz portfolio with momentum signals (MwM)}

We now consider a `full' Markowitz portfolio without any short-sale constraint.
The Markowitz portfolio minimizes the variance of a portfolio under two
conditions:
\begin{equation*}
\min \mbox{\bf w}^{\prime }\widetilde{\mbox{\boldmath $\Sigma$}}_{T,T+\tau }%
\mbox{\bf w}~~~~\text{subject ~to}~~\mbox{\bf w}^{\prime }\bm{1}=1~~\text{and%
}~~\mbox{\bf w}^{\prime }\mbox{\bf e}=b,
\end{equation*}%
where $b$ is a target expected return chosen by an investor and $\mbox{\bf e}
$ is a signal to denote the vector of expected returns of $p$ assets. The
above problem has the following analytical solution
\begin{equation}
\mbox{\bf w}=c_{1}\widetilde{\mbox{\boldmath $\Sigma$}}_{T,T+\tau }^{-1}%
\bm{1}+c_{2}\widetilde{\mbox{\boldmath $\Sigma$}}_{T,T+\tau }^{-1}%
\mbox{\bf
	e},  \label{Markowtiz With Momentum}
\end{equation}%
where
\begin{equation*}
c_{1}=\frac{C-bB}{AC-B^{2}},~~c_{2}=\frac{bA-B}{AC-B^{2}},~~A=\bm{1}^{\prime
}\widetilde{\mbox{\boldmath $\Sigma$}}_{T,T+\tau }^{-1}\bm{1},~~B=\bm{1}%
^{\prime }\widetilde{\mbox{\boldmath $\Sigma$}}_{T,T+\tau }^{-1}\bm{e},~~C=%
\bm{e}^{\prime }\widetilde{\mbox{\boldmath $\Sigma$}}_{T,T+\tau }^{-1}\bm{e}.
\end{equation*}%
To choose $\mbox{\bf e}$ and $b$, we follow Ledoit and Wolf (2014). In
particular, the $i$th element of $\mbox{\bf e}$ is the momentum factor which
is chosen as the arithmetic average of the previous 250 days returns on the $%
i$th stock. $b$ is the arithmetic average of the momentums of the
top-quintile stocks according to $\mbox{\bf e}$. In Table 2, we report the
annualized AV, SD, and IR of the daily log-returns for all methods, namely,
SQrM, SQrD, LS, the equal weight constructed on top-quintile stocks
according to their momentums (denoted by EW-TQ), and the method with $%
\widetilde{\mbox{\boldmath
		$\Sigma$}}_{T,T+\tau }^{-1}$ in (\ref{Markowtiz With Momentum}) being
replaced by the inverse of the sample covariance matrix of previous $J_{SP}$
days daily log-returns (denoted by SP when $J_{SP}=250$). Similar to the GMV
portfolio, we choose optimal $J$ and $J_{1}$ for SQrM and SQrD. For the
Markowitz portfolio, a more relevant criterion for the comparison is IR. In
this paper, we first compare the IRs and then the SDs.

In Table 2, the number in bold face represents the highest IR while the
number with a `*' represents the lowest SD. It can be seen that SQrM and
SQrD perform better than other methods in terms of IR except the EW when $%
p=30, 40$. However, the SDs of SQrM and SQrD are much lower than that of EW
and also lower than that of the other methods.

\subsection{Robustness of sample period}

To check the robustness of our strategy, we split the entire 174 investment
days into two subperiods, one from April 25, 2013 to August 27, 2013 and the
other from August 28, 2013 to December 31, 2013. The results of the GMV
portfolio are reported in Tables 3 and 4. It can be seen that SQrM and SQrD
continue to outperform other methods in terms of SD in all cases. Empirical
results of the Markowitz portfolio with the momentum signal are reported in
Tables 5 and 6. Again SQrM and SQrD continue to outperform other methods in
almost all cases in terms of IR and SD.

We also perform a moving-window analysis to check the robustness of our
empirical results. Staring from April 25, 2013, we calculate the standard
deviation of daily log-returns of each method over 42 trading days and
repeat this exercise by moving one trading day at each pass. To compare our
method with other methods, we use figures to show the results of TSo, LSo,
SQrD and SQrM for the GMV portfolio and SPo, LSo, SQrD and SQrM for the MwM
portfolio, where the optimal numbers of days chosen for each method is to
minimize or maximize the mean of SDs or IRs of 133 different investment
periods (each investment period is 42 days) for the GMV and MwM portfolio,
respectively. Figures 1, 2, 3 plot the results when $p=$ 30, 40 and 50.

We find that SQrM performs better and better as the portfolio size
increases. In general, it has the lowest SD and the highest IR for both the
GMV portfolio and the MwM portfolios. This result indicates that high
frequency data are useful in portfolio choice, especially for controlling
the risk.

\subsection{Robustness of time span}

From a statistical perspective, a longer span of historical data contains
more information about the dynamic of an asset price so that it may be
reasonable to believe that methods based on a longer span of data should
perform better than those based on less data. However, the model
specification is more likely to be wrong over a longer span. Hence there is
a trade off between the estimation error and the specification error. In
this subsection we examine this trade off empirically in the context of the
LS portfolio and the SQrD portfolio. In particular, the LS portfolio and the
SQrD portfolio are constructed based on different historical data sets for
the GMV portfolio. For LS, we set $J_{LS}=50,60,...,250$. For SQrD, we fix $%
J-J_{1}=1$ and set $J_{1}=50,60,...,250$.

Figure 4 plots the risk of daily log-returns for the two GMV portfolios\ as
a function of $J_{LS}$ or $J_{1}$ when $p=30$ and $p=50$. Some interesting
findings emerge. First, the risk of log-returns of a portfolio does not
necessarily decrease when a longer span of historical data is used. Second,
SQrD performs better than LS in almost all cases and is more stable across
different time spans. This is especially true when $p=50$. Once again, there
is an advantage for using our estimator for portfolio selection. Third, when
$p=30$, the risk of the SQrD portfolio decreases when $J_{1}$ increases
initially. This is because more data are used in estimation, reducing the
estimation error. However, the risk increases when $J_{1}>110$. This is
because the construction of SQrD relies on the assumption that $\mbox{\bf X}%
_{t}\in \mathcal{C}$. As $J_{1}$ increases, the time span becomes longer,
and hence the assumption that $\mbox{\bf X}_{t}\in \mathcal{C}$ is more
likely to be invalid. This can also explain why SQrM performs better than
SQrD.

\section{Conclusions}

This paper has developed a new estimator for the ICV and its inverse from
high frequency data when the portfolio size $p$ and the sample size of data $%
n$ satisfies $p/n\rightarrow y>0$ as $n$ goes to $\infty $. The use of high
frequency data drastically increases the sample size and hence reduces the
estimation error. To further prevent the estimation error from accumulating
with $p$, a new regularization method is applied to the eigenvalues of an
initial estimator of the ICV. Our proposed estimator of the ICV is always
positive definite and its inverse is the estimator of the inverse of the
ICV. It minimizes the limit of the out-of-sample variance of portfolio
returns within the class of rotation-equivalent estimators. It works when
the number of underlying assets is larger than the number of time series
observations in each asset and when the asset price follows a general
stochastic process.

The asymptotic optimality for our proposed method is justified under the
assumption that $p/n\rightarrow y>0$ as $n$ goes to $\infty $. The
usefulness of our estimator is examined in real data. The method is used to
construct the optimal weight in the global minimum variance and the
Markowitz portfolio with momentum signal based on the DJIA 30 and another 20
stocks chosen from S\&P500. The performance of our proposed method is
compared with that of some existing methods in the literature. The empirical
results show that our method performs favorably out-of-sample.

%We also find that high frequency data can decrease the risk of a portfolio significantly, however they can't increase the information ratio of a portfolio as high frequency data can't increase the accuracy of estimation of expected returns.

\section{Appendix}

In the appendix we first prove Theorem \ref{thm:LSD_d} as the proof of
Theorem \ref{thm:limit_loss} relies on Theorem \ref{thm:LSD_d}.

\begin{proof}[Proof of  Theorem \ref{thm:LSD_d}]
%	The convergence of the ESD of $\bS_{T-h, T}^{\TVA}$ is directly from the result of Theorem 2 in Zheng and Li (2011).
	By assumption (A.i), we can write
	\[
	\De \bX_k  \ = \  \int_{\tau_{k-1}}^{\tau_{k}} \ga_t \bLa d\bW_t
	\  \eqd \  \(  \int_{\tau_{k-1}}^{\tau_{k}} \ga_t^2 dt \)^{1/2} \breve{\bSig}^{1/2} \bz_k,
	\]
	where `$\eqd$' stands for `equal in distribution', $\breve{\bSig}=\bm{\Lambda}\bm{\Lambda}'$ and $\bz_k=(Z_{1k}, \cdots, Z_{pk})'$ consists of independent standard normals.
	Then
	\[
	\bS_{T-h, T}^{\TVA} = \dfrac{{\tr}\left(\widehat{\bSig}_{T-h, T}^{\RCV}\right)}{p} \cdot \dfrac p {n} \sum_{k=1}^n \breve{\bSig}^{1/2} \(\dfrac{\bz_k \bz_{k}' }{\bz'_k \breve{\bSig} \bz_k}\) \breve{\bSig}^{1/2},
	\]
	where ${\widehat{\bSig}_{T-h, T}^{\RCV}=\sum_{k =1}^{n}\Delta \mbox{\bf X}_{k}\Delta \mbox{\bf X}	_{k}^{\prime }}$, $\Delta \mbox{\bf X}	_{k}= \mbox{\bf X}	_{\tau _{k}}-\mbox{\bf X}	 _{\tau _{k-1}}$. Denote
	\[
	\bS_{T-h, T}^{IID}:=\sum_{k =1}^{n} \dfrac 1{n} \bSig_{T-h, T} ^{1/2} \bz_k\bz_{k}' \bSig_{T-h, T} ^{1/2}
	= \int_{T-h}^T\ga_t^2 dt \cdot \( \dfrac 1{n} \sum_{k =1}^{n} \breve{\bSig}^{1/2} \bz_k\bz_{k}' \breve{\bSig}^{1/2} \).
	\]
	
	From Theorem 2 of  Ledoit and P\`ech\`e (2011), we know that $p^{-1}{\tr}\left\{\(\bS_{T-h, T}^{IID}-z\bI\)^{-1} \bSig_{T-h, T} \right\}$ converges to
	\[
	s_\Psi(z)=\int\dfrac{r}{r \{1-y-yz \times s_F(z) \}-z} \ dH(r),
	\]
	almost surely, where $H$ is the LSD of matrices $\bSig_{T-h, T} $ and $F$ is the LSD of matrices $\bS_{T-h, T}^{IID}$ or $\bS_{T-h, T}^{\TVA}$, since they share the same LSD by Theorem 2 of Zheng and Li (2011).

 On the other hand, we have that $s_\Psi(z)$ is the Stieltjes transform of the bounded function $\Psi(x)$ defined in \eqref{eqn_psi} by Theorem 4 of Ledoit and P\`ech\`e (2011) and the Stieltjes transform of function $\Psi_p(x)$ is
	\[
	s_{\Psi_p}(z)=\dfrac 1p {\tr} \left\{ \(\bS_{T-h, T}^{\TVA}-z\bI\)^{-1} \bSig_{T-h, T} \right\}.
	\]
Therefore  we only need to show that
	\[
	\dfrac 1p {\tr} \left\{ \(\bS_{T-h, T}^{IID}-z\bI\)^{-1} \bSig_{T-h, T}  \right\} - \dfrac 1p {\tr} \left\{\(\bS_{T-h, T}^{\TVA}-z\bI\)^{-1} \bSig_{T-h, T} \right\} \stackrel {a.s.}{\to} 0.
	\]
	To prove this, it suffices to show the following two facts:
	\begin{eqnarray}\label{F1}
	\max_{1\le k\le n} \left|\dfrac 1p \bz_k' \breve{\bSig} \bz_k -1 \right| \stackrel {a.s.}{\to} 0,
	\end{eqnarray}
	and
	\begin{eqnarray}\label{F2}
	\dfrac 1p {\tr}\left(\widehat{\bSig}_{T-h, T}^{\RCV}\right) -\int_{T-h}^T \ga_t^2 dt \stackrel {a.s.}{\to} 0.
	\end{eqnarray}
	
	To prove \eqref{F1}, by assumption (A.iii), all the eigenvalues of $\breve{\bSig}$ are bounded, so that ${\tr}(\breve{\bSig}^r)=O(p)$ for all $1\le r<\infty$.
	From Lemma 2.7 of Bai and Silverstein (1998), we have
	\begin{eqnarray*}
		\E \( \max_{1\le k\le n} \left| p^{-1} \bz_k' \breve{\bSig} \bz_k -1 \right|^6 \)
		&\le& \dfrac{Cn}{p^6} \(\left\{\E|Z_{jk}|^4 \tr\left(\breve{\bSig}^2\right) \right\}^3+ \E|Z_{jk}|^{12} \tr\left(\breve{\bSig}^6\right) \) \\
		&=& O(n^{-2}),
	\end{eqnarray*}
	where the last step comes from the fact that the higher order moments of $Z_{jk}$'s are finite since they are normally distributed. Thus, \eqref{F1} follows by the Borel-Cantelli lemma.
	
	We now prove \eqref{F2}.
	\begin{eqnarray*}
		\left| p^{-1}{\tr}\left(\widehat{\bSig}_{T-h, T}^{\RCV}\right) -\int_{T-h}^T \ga_t^2 dt \right|
		&=& \left| p^{-1}\sum_{k=1}^n \int_{\tau_{k-1}}^{\tau_{k}} \ga_t^2dt \cdot \bz_k' \breve{\bSig}\bz_k- \int_{T-h}^T \ga_t^2 dt \right|  \\
		&=& \left| \sum_{k=1}^n  \int_{\tau_{k-1}}^{\tau_{k}} \ga_t^2dt \cdot \(p^{-1}\bz_k' \breve{\bSig} \bz_k -1 \) \right|  \\
		&\le& \max_{1\le k\le n} \left| p^{-1}\bz_k' \breve{\bSig}\bz_k-1 \right| \cdot \int_{T-h}^T \ga_t^2 dt \\
		&\stackrel {a.s.}{\to}& 0.
	\end{eqnarray*}
	by assumption (A.iv) and the result in Equation \eqref{F1}.
	
	Then,
	\begin{eqnarray*}
		&&\dfrac 1p {\tr}\left\{ \(\bS_{T-h, T} ^{IID}-z\bI\)^{-1}\bSig_{T-h, T} \right\} - \dfrac 1p {\tr}\left\{ \(\bS_{T-h, T}^{\TVA}-z\bI\)^{-1}\bSig_{T-h, T} \right\}\\
		&=& \dfrac 1p {\tr}\left\{\(\bS_{T-h, T} ^{IID}-z\bI\)^{-1} \(\bS_{T-h, T}^{\TVA}-\bS_{T-h, T}^{IID}\) \(\bS_{T-h, T}^{\TVA}-z\bI\)^{-1}  \bSig_{T-h, T} \right\}\\
		&=& {\small \dfrac 1p {\tr}\left\{\(\bS_{T-h, T}^{IID}-z\bI\)^{-1} \dfrac{{\tr}\left(\widehat{\bSig}_{T-h, T}^{\RCV}\right)}{p(n)} \cdot  \sum_{k=1}^n \(\dfrac{1}{p^{-1}\bz_k' \breve{\bSig}\bz_k }-1\) \breve{\bSig}^{1/2} \bz_k \bz_{\ell}' \breve{\bSig}^{1/2} \(\bS_{T-h, T}^{\TVA}-z\bI\)^{-1}\bSig_{T-h, T}  \right\}}\\
		&&  + \dfrac 1{p(n)} {\tr}\left( \(\bS_{T-h, T}^{IID}-z\bI\)^{-1} \left\{p^{-1}{\tr}\left(\widehat{\bSig}_{T-h, T}^{\RCV}\right)-\int_{T-h}^T\ga_t^2dt\right\}\right.\\
			&&\left.\times \sum_{k=1}^n \breve{\bSig}^{1/2} \bz_k \bz_{k}' \breve{\bSig}^{1/2} \(\bS_{T-h, T}^{\TVA}-z\bI\)^{-1}\bSig_{T-h, T}   \right)\\
		&:=& I_1+I_2.
	\end{eqnarray*}
	From assumptions (A.ii)-(A.v), and the facts that
	$\| (\bS_{T-h, T}^{IID}-z\bI)^{-1} \| \le 1/\Im(z)$,  $\| (\bS_{T-h, T}^{\TVA}-z\bI)^{-1} \| \le 1/\Im(z)$ with  $\left\|\cdot\right\|$ denoting the $L_2$ norm of a matrix, \eqref{F1} and \eqref{F2}, we have that both $|I_1|$ and $|I_2|$ converge to 0, almost surely.
	Therefore, the proof of Theorem \ref{thm:LSD_d} is completed.
\end{proof}

\begin{proof}[Proof of  Theorem \ref{thm:limit_loss}]
	The convergence of ESD of $\bS_{T-h, T}^{\TVA}$ is shown in Theorem 2 of Zheng and Li (2011).
	Note that $\left\|\left(\widehat{\bSig}_{ T-h, T} ^*\right)^{-1} \right\|\le C$ for some fixed number $C$ when $p$ large enough  by assumption (A.v) and the fact that $\left(\widehat{\bSig}_{ T-h, T} ^*\right)^{-1} $ belongs to class $\cS$.  Thus, from Lemma 2.7 of Bai and Silverstein (1998) and Borel-Cantelli lemma, we have
	\[
	\dfrac 1p \1' \left(\widehat{\bSig}_{ T-h, T} ^*\right)^{-1}  \1 -\dfrac 1p {\tr}\left\{ \left(\widehat{\bSig}_{ T-h, T} ^* \right)^{-1}\right\} \stackrel {a.s.}{\to} 0.
	\]
	Moreover, we have
\begin{eqnarray*}
		\dfrac 1p {\tr}\left\{ \left(\widehat{\bSig}_{ T-h, T} ^*\right)^{-1} \right\}&=& \dfrac 1p \sum_{i=1}^p \dfrac{1}{g_n(v_{i})}
		= \int \dfrac{1}{g_n(x)} dF^{\bS_{T-h, T}^{\TVA}} (x)  \\
		&\stackrel {a.s.}{\to}& \int\dfrac{1}{g(x)} dF(x).
	\end{eqnarray*}
	Therefore,
	\begin{eqnarray}\label{conv_1}
	\dfrac 1p \1'  \left(\widehat{\bSig}_{ T-h, T} ^*\right)^{-1}  \1  \stackrel {a.s.}{\to}   \int\dfrac{1}{g(x)} dF(x).
	\end{eqnarray}
	
	Similarly, we can show that
	\[
	\dfrac 1p \1' \left(\widehat{\bSig}_{ T-h, T} ^*\right)^{-1}  \bSig_{T-h, T}  \left(\widehat{\bSig}_{ T-h, T} ^*\right)^{-1}  \1 - \dfrac 1p {\tr}\left\{\left(\widehat{\bSig}_{ T-h, T} ^*\right)^{-1} \bSig_{T-h, T}  \left(\widehat{\bSig}_{ T-h, T} ^*\right)^{-1} \right\} \stackrel {a.s.}{\to} 0.
	\]
	Using Theorem \ref{thm:LSD_d}, we have
	\begin{eqnarray*}
		\dfrac 1p {\tr}\left\{ \left(\widehat{\bSig}_{ T-h, T} ^*\right)^{-1} \bSig_{T-h, T}  \left(\widehat{\bSig}_{ T-h, T} ^*\right)^{-1} \right\}
		&=& \dfrac 1p {\tr} (\bU' \bSig_{T-h, T} \bU\bV^{-2})
		=\dfrac 1p \sum_{i=1}^p \dfrac{\bu_{i}' \bSig_{T-h, T} \bu_{i} }{g_n(v_{i})^2} \\
		&\stackrel {a.s.}{\to}& \int\dfrac{x}{|1-y-yx\times \breve{m}_F(x)|^2  g(x)^2} dF(x).
	\end{eqnarray*}
	Thus,
	\begin{eqnarray}\label{conv_2}
	\dfrac 1p \1' \left(\widehat{\bSig}_{ T-h, T} ^*\right)^{-1}  \bSig_{T-h, T}  \left(\widehat{\bSig}_{ T-h, T} ^*\right)^{-1}  \1 \stackrel {a.s.}{\to}
	\int\dfrac{x}{|1-y-yx\times \breve{m}_F(x)|^2 g(x)^2} dF(x).
	\end{eqnarray}
	Combining \eqref{conv_1} and \eqref{conv_2}, we obtain that
	\[
	p \cdot \dfrac{\1' \left(\widehat{\bSig}_{ T-h, T} ^*\right)^{-1} \bSig_{0, T-h} \left(\widehat{\bSig}_{ T-h, T} ^*\right)^{-1} \1}{\left(\1' \left(\widehat{\bSig}_{ T-h, T} ^*\right)^{-1}\1\right)^2}
	\stackrel {a.s.}{\to} \dfrac{\ \int\dfrac{x}{|1-y-yx\times \breve{m}_F(x)|^2 g(x)^2} dF(x)}{\(\ \int\dfrac{dF(x)}{g(x)}\)^2}.
	\]
\end{proof}

\begin{table}[tbp]
\caption{The out-of-sample performance of different daily rebalanced
strategies for the GMV portfolio between April 25, 2013 and December 31,
2013.}
\label{GMV Whole}\centering
\begin{tabular}{|c||c|c|c|c|c|c|c|}
\hline
\multicolumn{8}{|c|}{{\small {Period: 04/25/2013---12/31/2013}}} \\ \hline
$p=30$ & EW & TS & TSo & LS & LSo & SQrD & SQrM \\ \hline
AV & 20.13 & 13.22 & 13.22 & 15.31 & 12.96 & 10.59 & 15.62 \\ \hline
SD & 10.17 & 9.65 & 9.65 & 9.80 & 9.52 & 9.34 & \textbf{9.17} \\ \hline
IR & 1.98 & 1.37 & 1.37 & 1.56 & 1.36 & 1.80 & 1.70 \\ \hline\hline
$p=40$ & EW & TS & TSo & LS & LSo & SQrD & SQrM \\ \hline
AV & 21.00 & 16.51 & 17.80 & 16.00 & 11.84 & 19.06 & 18.09 \\ \hline
SD & 10.43 & 9.66 & 9.62 & 9.85 & 9.29 & 9.29 & \textbf{9.10} \\ \hline
IR & 2.01 & 1.71 & 1.85 & 1.62 & 1.27 & 2.05 & 1.99 \\ \hline\hline
$p=50$ & EW & TS & TSo & LS & LSo & SQrD & SQrM \\ \hline
AV & 21.00 & 20.15 & 20.15 & 13.28 & 10.25 & 17.74 & 20.52 \\ \hline
SD & 10.36 & 9.40 & 9.40 & 9.47 & 9.18 & 9.26 & \textbf{8.68} \\ \hline
IR & 2.03 & 2.14 & 2.14 & 1.40 & 1.12 & 1.91 & 2.36 \\ \hline
\end{tabular}%
\par
\begin{flushleft}
{\footnotesize {Note: AV, SD, IR denote the average, standard deviation, and
information ratio of 174 daily log-returns, respectively. AV and SD are
annualized and in percent. The smallest number in the row labeled by SD is
reported in bold face. TSo corresponds to the case where $%
\mbox{\boldmath
$\Sigma$}_{T-h,T}$ is estimated by the two-scale covariance matrix obtained
based on historical intra-day data (10 days when $p=30, 50$; 8 days when $%
p=40$). LSo corresponds to the case where $\mbox{\boldmath $\Sigma$}_{T-h,T}$
is estimated by the linear shrinkage of the sample covariance matrix of
daily log-returns (110, 90 and 90 days when $p=30, 40$ and $50$
respectively). The optimal number of days is chosen by minimizing SD of 174
log-returns of each portfolio.} }
\end{flushleft}
\end{table}

\begin{table}[tbp]
\caption{The out-of-sample performance of different daily rebalanced
strategies for Markowitz portfolio with momentum signal between April 25,
2013 and December 31, 2013.}\centering
\begin{tabular}{|c||c|c|c|c|c|c|c|}
\hline
\multicolumn{8}{|c|}{{\small {Period: 04/25/2013---12/31/2013}}} \\ \hline
$p=30$ & EW-TQ & SP & SPo & LS & LSo & SQrD & SQrM \\ \hline
AV & 31.74 & 0.02 & 4.18 & 13.02 & 13.02 & 20.02 & 15.91 \\ \hline
SD & 13.27 & 12.10 & 12.06 & 11.59 & 11 56 & 11.37 & $11.11^* $ \\ \hline
IR & \textbf{2.39} & 0.00 & 0.35 & 1.12 & 1.12 & {1.76} & 1.43 \\
\hline\hline
$p=40$ & EW & SP & SPo & LS & LSo & SQrD & SQrM \\ \hline
AV & 36.49 & 6.86 & 8.94 & 18.98 & 18.98 & 24.67 & 20.21 \\ \hline
SD & 13.55 & 10.90 & 10.99 & 11.29 & 10.66 & 11.25 & $10.07^* $ \\ \hline
IR & \textbf{2.69} & 0.63 & 0.81 & 1.68 & 1.68 & 2.19 & {2.01} \\
\hline\hline
$p=50$ & EW-TQ & SP & SPo & LS & LSo & SQrD & SQrM \\ \hline
AV & {28.64} & 7.53 & 11.33 & 15.81 & 15.81 & {22.65} & 23.35 \\ \hline
SD & 13.16 & 10.42 & 10.80 & 10.60 & 10.60 & 10.51 & $9.83^*$ \\ \hline
IR & 2.18 & 0.72 & 1.05 & 1.49 & 2.03 & 2.15 & \textbf{2.38} \\ \hline
\end{tabular}%
\par
\begin{flushleft}
{\footnotesize {Note: AV, SD, IR denote the average, standard deviation, and
information ratio of 174 daily log-returns respectively. AV, SD are
annualized and in percent. The smallest number in the row labeled by SD is
reported in bold face. SPo corresponds to the case where $%
\mbox{\boldmath
$\Sigma$}_{T-h,T}$ is estimated by the sample covariance matrix of daily
log-returns (190, 230 and 130 days when $p=30, 40$ and $50$, respectively).
LSo corresponds to the case where $\mbox{\boldmath
$\Sigma$}_{T-h,T}$ is estimated by the linear shrinkage of the sample
covariance matrix of daily log-returns (250 days when $p=30, 40$ and $50$).
The optimal number of days is chosen by maximizing IR of 174 log-returns of
each portfolio. } }
\end{flushleft}
\end{table}

\begin{table}[tbp]
\caption{The out-of-sample performance of different daily rebalanced
strategies for the GMV portfolio between April 25, 2013 and August 27, 2013.}%
\centering
\begin{tabular}{|c||c|c|c|c|c|c|c|}
\hline
\multicolumn{8}{|c|}{{\small {Period: 04/25/2013---08/27/2013}}} \\ \hline
$p=30$ & EW & TS & TSo & LS & LSo & SQrD & SQrM \\ \hline
AV & 5.23 & 2.23 & 2.23 & 2.63 & 9.90 & 2.93 & 1.87 \\ \hline
SD & 10.93 & 10.05 & 10.05 & 10.64 & 9.85 & 9.97 & \textbf{\ 9.81} \\ \hline
IR & 0.48 & 0.22 & 0.22 & 0.25 & 1.00 & 0.29 & 0.19 \\ \hline\hline
$p=40$ & EW & TS & TSo & LS & LSo & SQrD & SQrM \\ \hline
AV & 5.04 & 3.64 & 4.35 & 0.48 & 4.49 & 0.43 & 0.42 \\ \hline
SD & 11.09 & 10.26 & 10.00 & 10.51 & 9.45 & 9.78 & \textbf{9.73 } \\ \hline
IR & 0.45 & 0.36 & 0.43 & 0.05 & 0.48 & 0.04 & 0.04 \\ \hline\hline
$p=50$ & EW & TS & TSo & LS & LSo & SQrD & SQrM \\ \hline
AV & 6.53 & 9.35 & 9.35 & -1.78 & 8.29 & 1.70 & 5.70 \\ \hline
SD & 11.06 & 10.12 & 10.12 & 10.18 & \textbf{9.18} & 9.89 & {\ 9.30} \\
\hline
IR & 0.59 & 0.92 & 0.92 & -0.17 & 0.89 & 0.17 & 0.61 \\ \hline
\end{tabular}%
\par
\begin{flushleft}
{\footnotesize {Note: AV, SD, IR denote the average, standard deviation, and
information ratio of 87 daily log-returns, respectively. AV, SD are
annualized and in percent. The smallest number in the row labeled by SD is
reported in bold face. TSo corresponds to the case where $%
\mbox{\boldmath
$\Sigma$}_{T-h,T}$ is estimated by the two-scale covariance matrix obtained
based on historical intra-day data (10 days when $p=30, 50$; 8 days when $%
p=40$). LSo corresponds to the case where $\mbox{\boldmath $\Sigma$}_{T-h,T}$
is estimated by the linear shrinkage of the sample covariance matrix of
daily log-returns (110, 90 and 90 days when $p=30, 40$ and $50$
respectively). } }
\end{flushleft}
\end{table}

%AV     & & & & &  &  &      \\
%\hline
%SD      & & & & &  &  &      \\
%\hline
%IR     & & & & &  &  &      \\

\begin{table}[tbp]
\caption{The out-of-sample performance of different daily rebalanced
strategies for the GMV portfolio between August 28, 2013 to December 31,
2013.}\centering
\begin{tabular}{|c||c|c|c|c|c|c|c|}
\hline
\multicolumn{8}{|c|}{{\small {Period: 08/28/2013---12/31/2013}}} \\ \hline
$p=30$ & EW & TS & TSo & LS & LSo & SQrD & SQrM \\ \hline
AV & 35.04 & 24.22 & 24.22 & 27.99 & 16.02 & 30.70 & 29.36 \\ \hline
SD & 9.31 & 9.24 & 9.24 & 8.87 & 9.17 & 8.64 & \textbf{8.45} \\ \hline
IR & 3.76 & 2.62 & 2.62 & 3.15 & 1.75 & 3.55 & 3.48 \\ \hline\hline
$p=40$ & EW & TS & TSo & LS & LSo & SQrD & SQrM \\ \hline
AV & 36.97 & 29.37 & 31.25 & 31.52 & 19.19 & 20.15 & 25.94 \\ \hline
SD & 9.69 & 9.01 & 9.19 & 9.09 & 9.16 & 8.54 & \textbf{8.32} \\ \hline
IR & 3.82 & 3.26 & 3.40 & 3.47 & 2.09 & 4.411 & 4.30 \\ \hline\hline
$p=50$ & EW & TS & TSo & LS & LSo & SQrD & SQrM \\ \hline
AV & 35.47 & 30.95 & 30.95 & 28.33 & 12.20 & 33.78 & 35.34 \\ \hline
SD & 9.60 & 8.64 & 8.64 & 8.67 & 9.05 & 8.52 & \textbf{\ 7.95} \\ \hline
IR & 3.70 & 3.58 & 3.58 & 3.27 & 1.35 & 3.96 & 4.44 \\ \hline
\end{tabular}%
\par
\begin{flushleft}
{\footnotesize {Note: AV, SD, IR denote the average, standard deviation, and
information ratio of 87 daily log-returns, respectively. AV, SD are
annualized and in percent. The smallest number in the row labeled by SD is
reported in bold face. TSo corresponds to the case where $%
\mbox{\boldmath
$\Sigma$}_{T-h,T}$ is estimated by the two-scale covariance matrix obtained
based on historical intra-day data (10 days when $p=30, 50$ and 8 days when $%
p=40$). LSo corresponds to the case where $\mbox{\boldmath $\Sigma$}_{T-h,T}$
is estimated by the linear shrinkage of the sample covariance matrix of
daily log-returns (110, 90 and 90 days when $p=30, 40$ and $50$
respectively). } }
\end{flushleft}
\end{table}

\begin{table}[tbp]
\caption{The out-of-sample performance of different daily rebalanced
strategies for the Markowitz portfolio with momentum signal between April
25, 2013 to August 27, 2013.}\centering
\begin{tabular}{|c||c|c|c|c|c|c|c|}
\hline
\multicolumn{8}{|c|}{{\small {Period: 04/25/2013---08/27/2013}}} \\ \hline
$p=30$ & EW-TQ & SP & SPo & LS & LSo & SQrD & SQrM \\ \hline
AV & 8.70 & -10.05 & -1.43 & -2.99 & -2.99 & 1.73 & 0.13 \\ \hline
SD & 14.51 & 13.02 & 13.03 & 12.54 & 12.54 & $11.94^*$ & 12.06 \\ \hline
IR & \textbf{\ 0.60 } & -0.81 & -0.11 & -0.24 & -0.24 & 0.01 & 0.04 \\
\hline\hline
$p=40$ & EW-TQ & SP & SPo & LS & LSo & SQrD & SQrM \\ \hline
AV & 9.21 & -11.39 & -9.39 & -4.04 & -4.04 & 4.14 & -2.69 \\ \hline
SD & 14.19 & 10.91 & 10.94 & 11.45 & 10.99 & 11.49 & $10.42^* $ \\ \hline
IR & \textbf{0.65} & -1.04 & -0.86 & -0.35 & -0.35 & 0.36 & -0.26 \\
\hline\hline
$p=50$ & EW-TQ & SP & SPo & LS & LSo & SQrD & SQrM \\ \hline
AV & 3.50 & -8.15 & 1.71 & -6.25 & -6.25 & {6.98} & 4.16 \\ \hline
SD & 14.28 & 11.01 & 10.48 & 10.92 & 10.92 & 10.84 & $10.47^*$ \\ \hline
IR & 0.25 & -0.74 & 0.16 & -0.57 & -0.57 & \textbf{0.64} & 0.40 \\ \hline
\end{tabular}%
\par
\begin{flushleft}
{\footnotesize {Note: AV, SD, IR denote the average, standard deviation, and
information ratio of 87 daily log-returns respectively. AV, SD are
annualized and in percent. The smallest number in the row labeled by SD is
reported in bold face. SPo corresponds to the case where $%
\mbox{\boldmath
$\Sigma$}_{T-h,T}$ is estimated by the sample covariance matrix of daily
log-returns (190, 230 and 130 days when $p=30, 40$ and $50$ respectively).
LSo corresponds to the case where $\mbox{\boldmath
$\Sigma$}_{T-h,T}$ is estimated by the linear shrinkage of the sample
covariance matrix of daily log-returns (250 days when $p=30, 40$ and $50$). }
}
\end{flushleft}
\end{table}

\begin{table}[tbp]
\caption{The out-of-sample performance of different daily rebalanced
strategies for the Markowitz portfolio with momentum signal between August
28, 2013 to December 31, 2013.}\centering
\begin{tabular}{|c||c|c|c|c|c|c|c|}
\hline
\multicolumn{8}{|c|}{{\small {Period: 08/28/2013---12/31/2013}}} \\ \hline
$p=30$ & EW-TQ & SP & SPo & LS & LSo & SQrD & SQrM \\ \hline
AV & 54.79 & 10.59 & 9.79 & 29.02 & 29.02 & 39.95 & 31.35 \\ \hline
SD & 11.82 & 11.14 & 11.06 & 10.53 & 10.53 & 10.69 & {$10.02^*$} \\ \hline
IR & \textbf{4.64 } & 0.95 & 0.89 & 2.76 & 2.76 & {3.74} & {\ 3.11} \\
\hline\hline
$p=40$ & EW-TQ & SP & SPo & LS & LSo & SQrD & SQrM \\ \hline
AV & 63.78 & 25.11 & 27.28 & 41.99 & 41.99 & 45.21 & 43.11 \\ \hline
SD & 12.73 & 10.84 & 10.97 & 11.00 & 11.00 & 10.92 & $9.55^*$ \\ \hline
IR & \textbf{5.01} & 2.32 & 2.49 & 3.82 & 3.82 & 4.14 & {4.51} \\
\hline\hline
$p=50$ & EW-TQ & SP & SPo & LS & LSo & SQrD & SQrM \\ \hline
AV & {53.77} & 23.20 & 20.94 & 37.87 & 37.87 & 38.31 & 42.54 \\ \hline
SD & 11.80 & 9.76 & 11.13 & 10.16 & 10.16 & 10.14 & {$9.05^*$} \\ \hline
IR & 4.56 & {2.38} & 1.88 & 3.73 & 3.73 & 3.78 & \textbf{4.70} \\ \hline
\end{tabular}%
\par
\begin{flushleft}
{\footnotesize {Note: AV, SD, IR denote the average, standard deviation, and
information ratio of 87 daily log-returns respectively. AV, SD are
annualized and in percent. The smallest numbers in the row labeled by SD is
reported in bold face. SPo corresponds to the case where $%
\mbox{\boldmath
$\Sigma$}_{T-h,T}$ is estimated by the sample covariance matrix of daily
log-returns (190, 230 and 130 days when $p=30, 40$ and $50$, respectively).
LSo corresponds to the case where $\mbox{\boldmath
$\Sigma$}_{T-h,T}$ is estimated by the linear shrinkage of the sample
covariance matrix of daily log-returns (250 days when $p=30, 40$ and $50$). }
}
\end{flushleft}
\end{table}

\begin{figure}[tbp]
\caption{Information ratios and standard deviations of log-returns of four
strategies based on rolling windows of historical data for the GMV and MwM
portfolios when $p=30$. }\centering
\includegraphics[width=7.5cm]{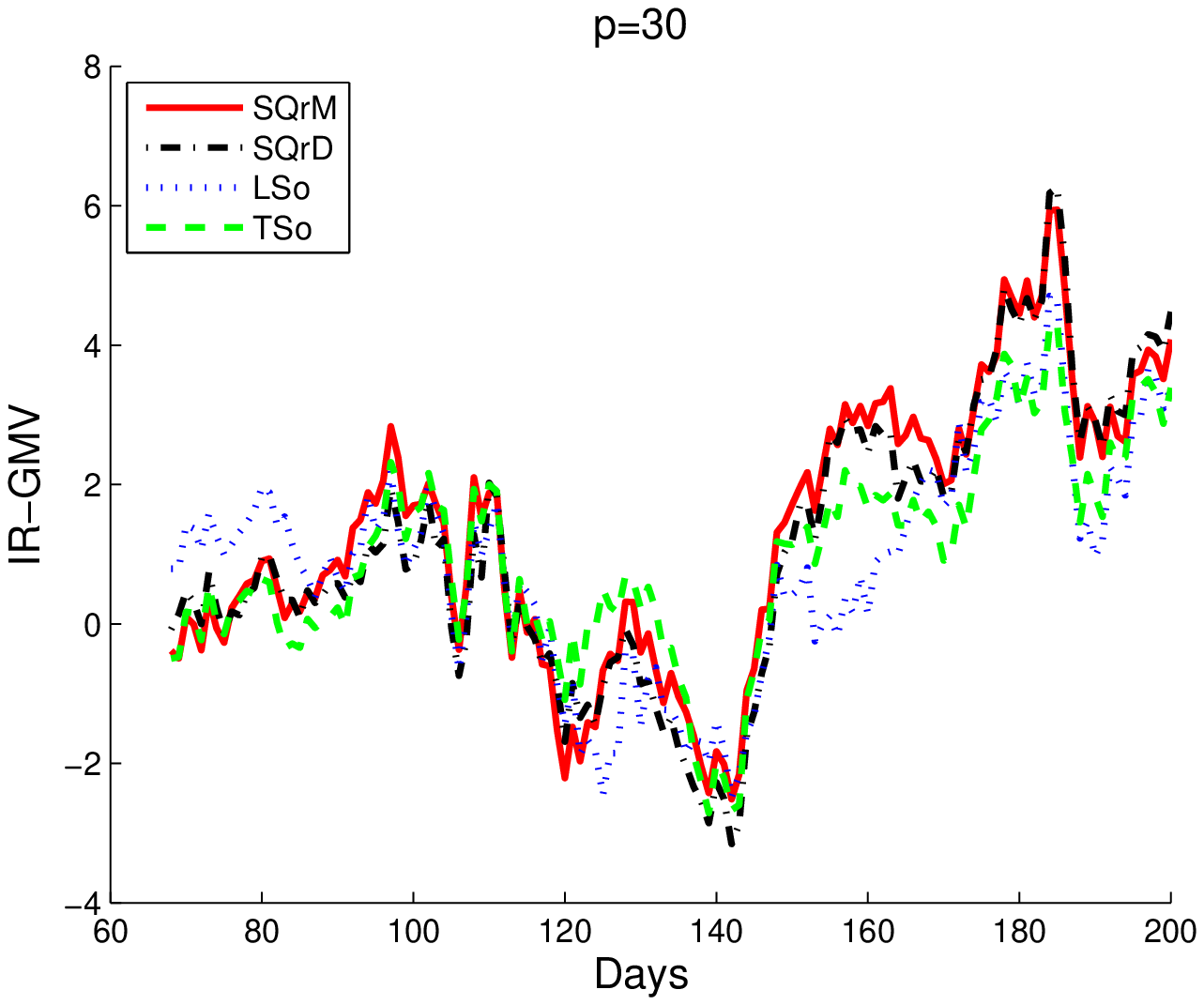} %
\includegraphics[width=7.5cm]{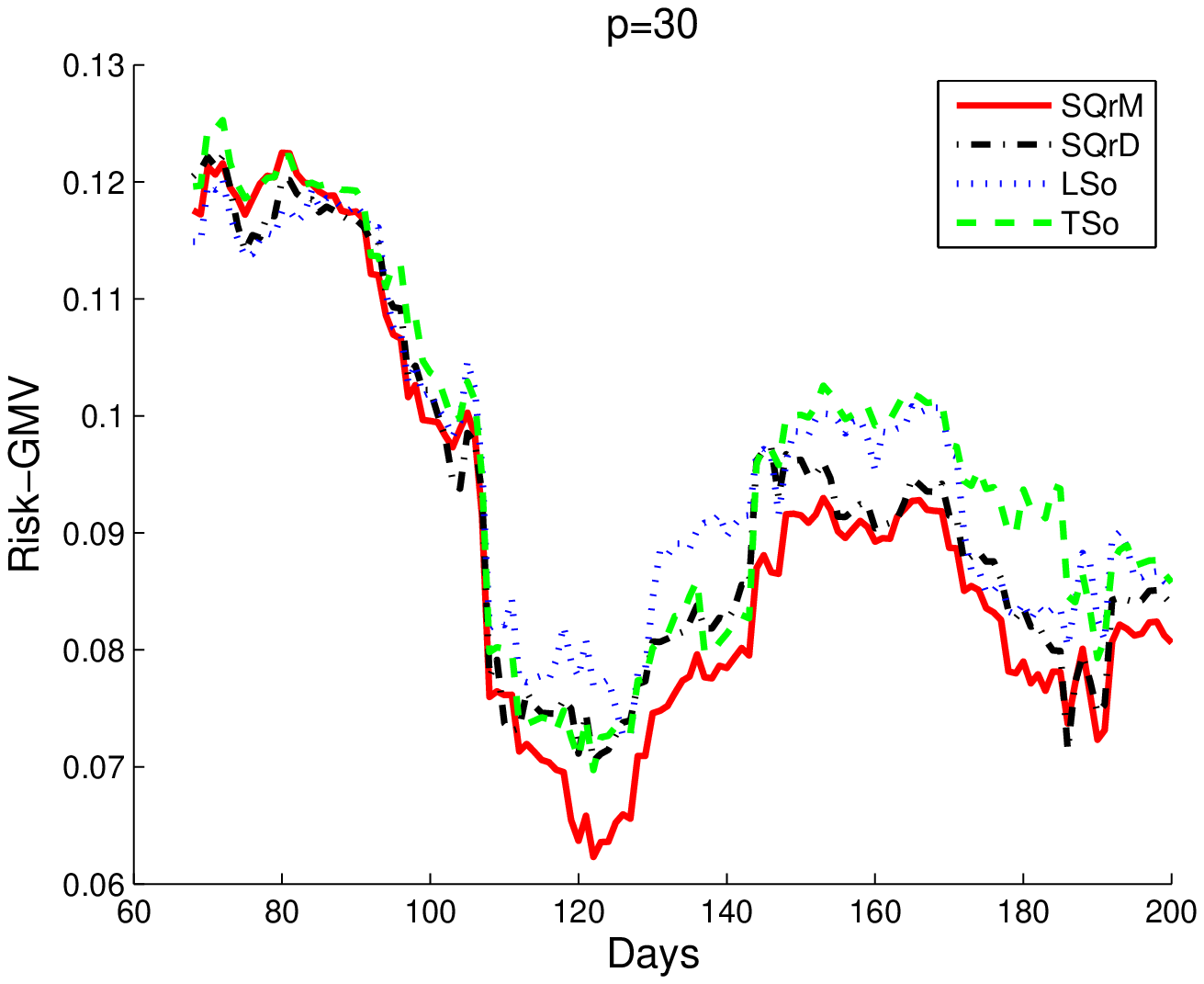} %
\includegraphics[width=7.5cm]{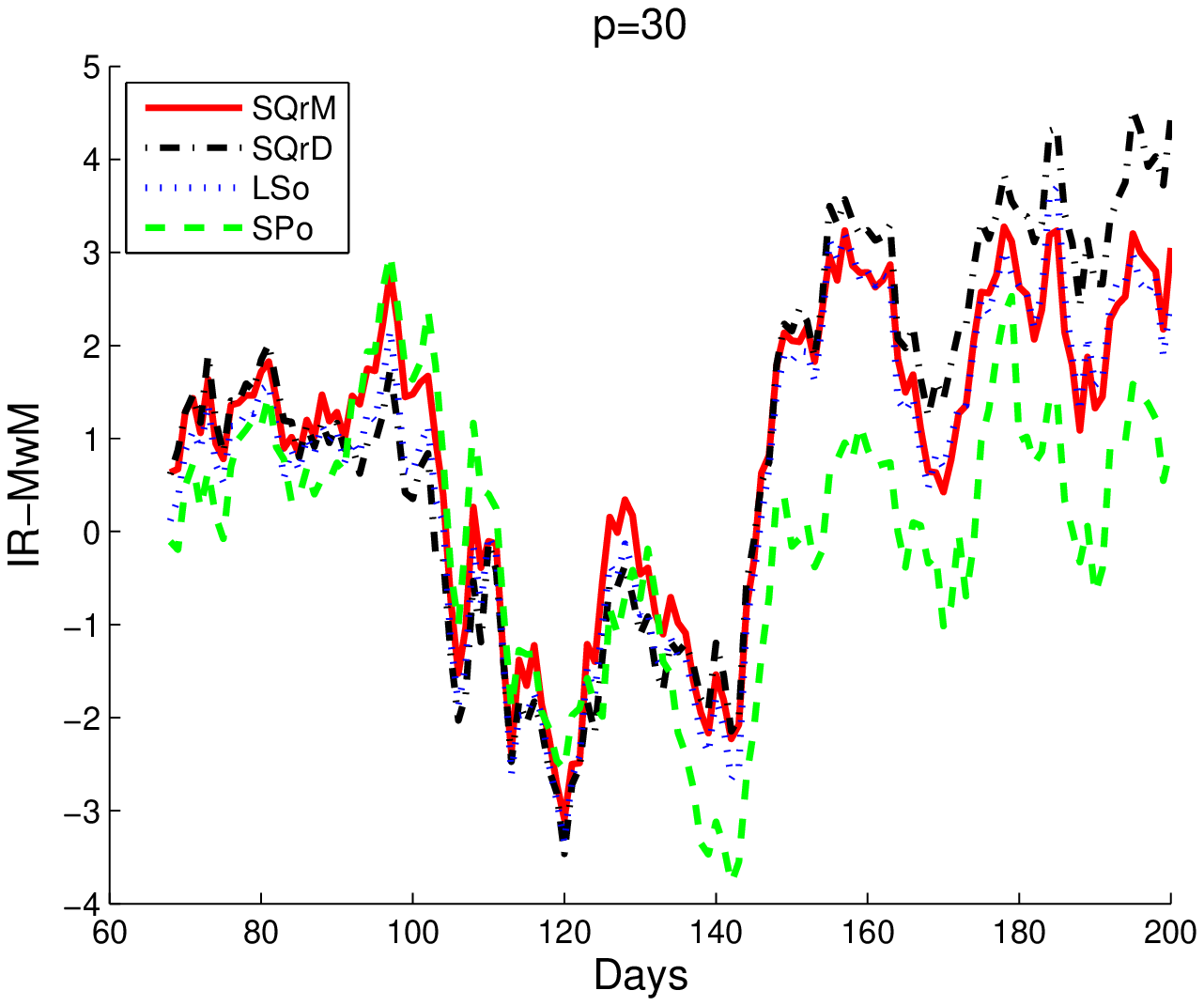} %
\includegraphics[width=7.5cm]{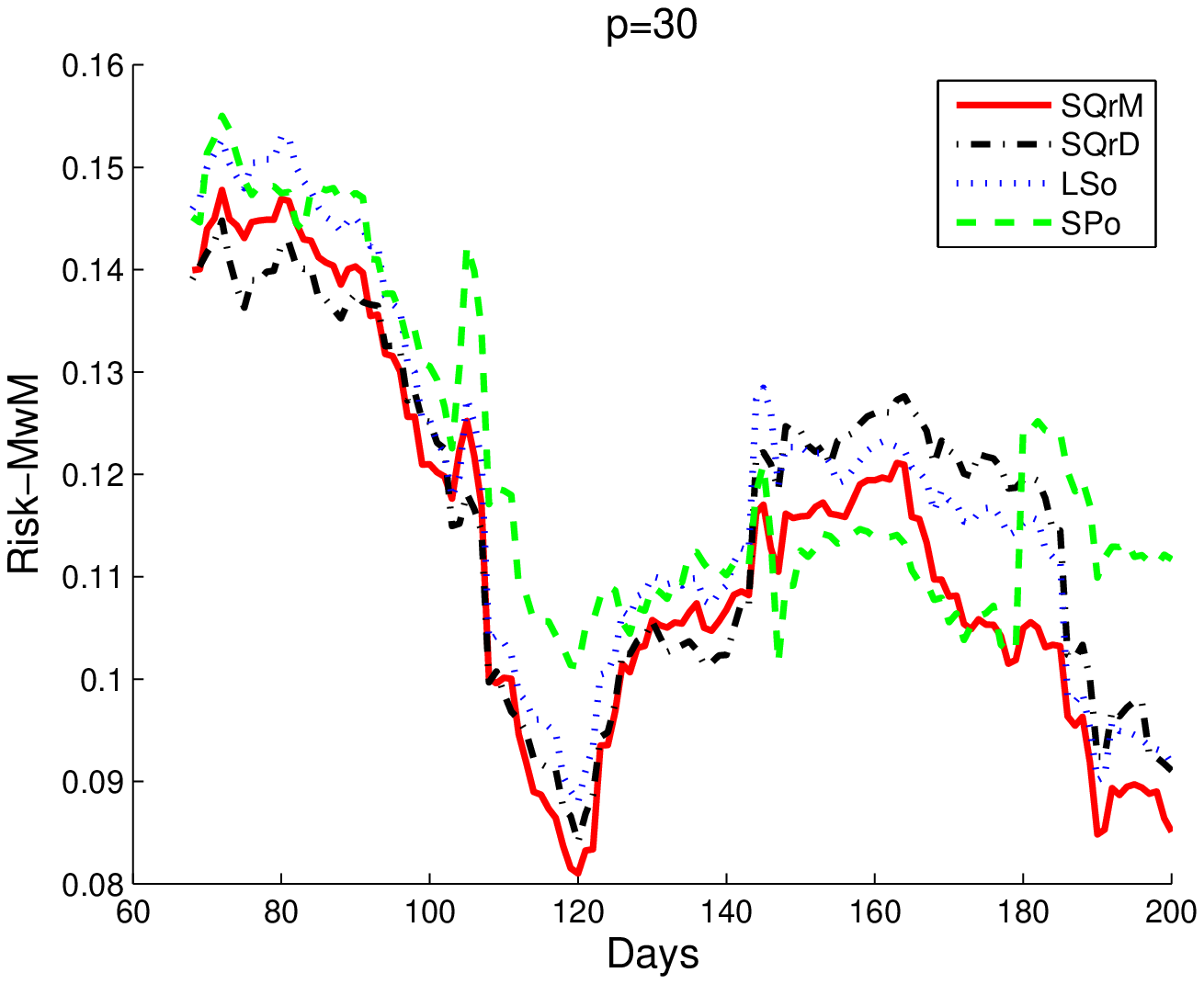}
\par
\begin{flushleft}
{\footnotesize {Note: Rolling windows of annualized standard deviations and
information ratios of log-returns for the GMV and MwM portfolios. Each point
is the standard deviation or information ratio of 42 log-returns of each
portfolio strategy. Move one trading day forward at one time such that there
are 133 different investment periods and each period contains 42 days (two
months). The upper plots correspond to: SQrM uses 1 day of all intra-day
data and 9 days of 15-minute data; SQrD uses 5 days of all intra-day data
and 110 days of daily data; LSo uses 250 daily data; TSo uses 10 days of
intra-day data. The bottom plots correspond to: SQrM use 5 days of all
intra-day data and 14 days of 15-minute data; SQrD uses 2 days of all
intra-day data and 90 days of daily data; LSo and SPo use 110 days of daily
data. } }
\end{flushleft}
\end{figure}

\begin{figure}[tbp]
\caption{Information ratios and standard deviations of log-returns of four
strategies based on rolling windows of historical data for the GMV and MwM
portfolios when $p=40$. }\centering
\includegraphics[width=7.5cm]{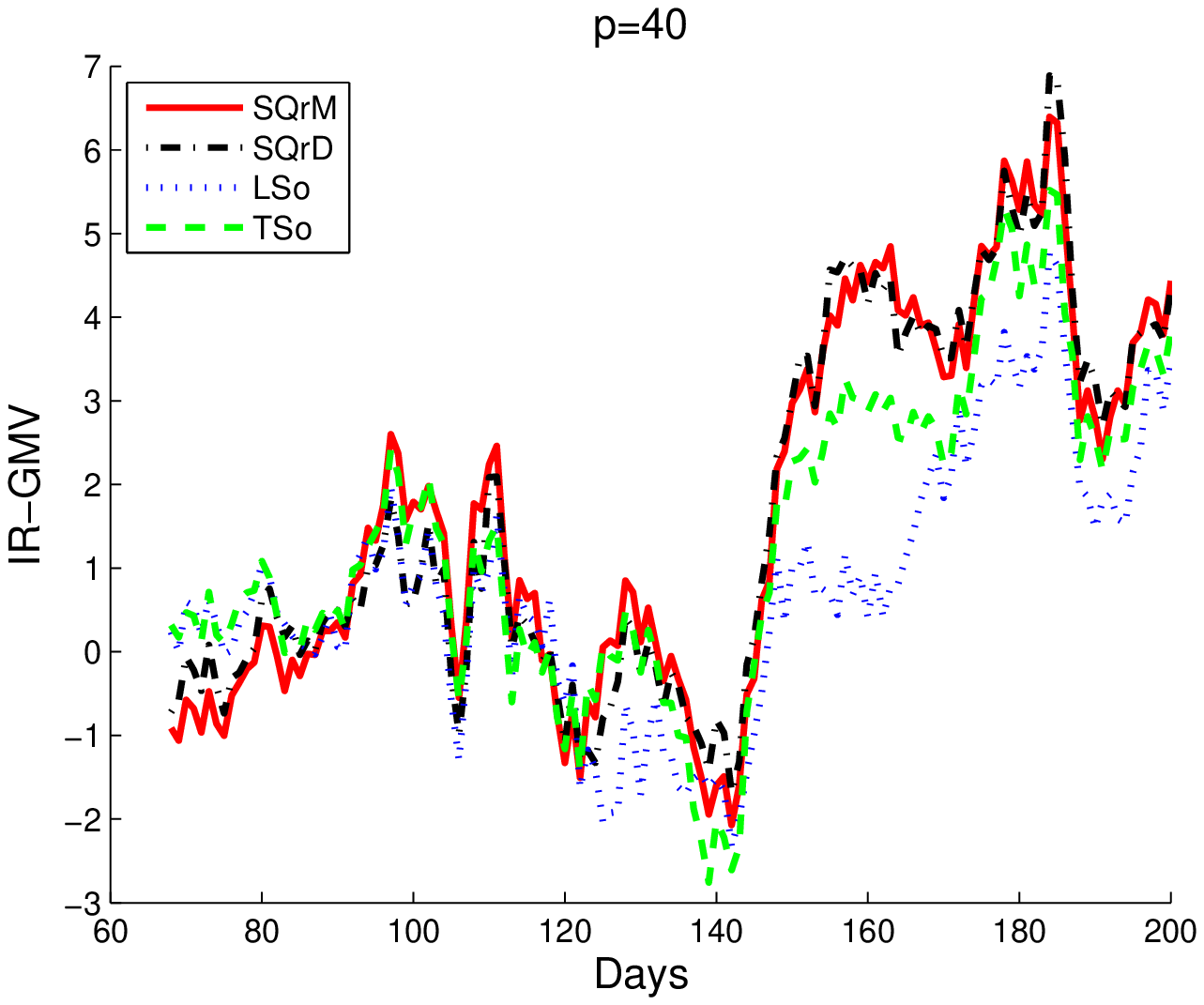} %
\includegraphics[width=7.5cm]{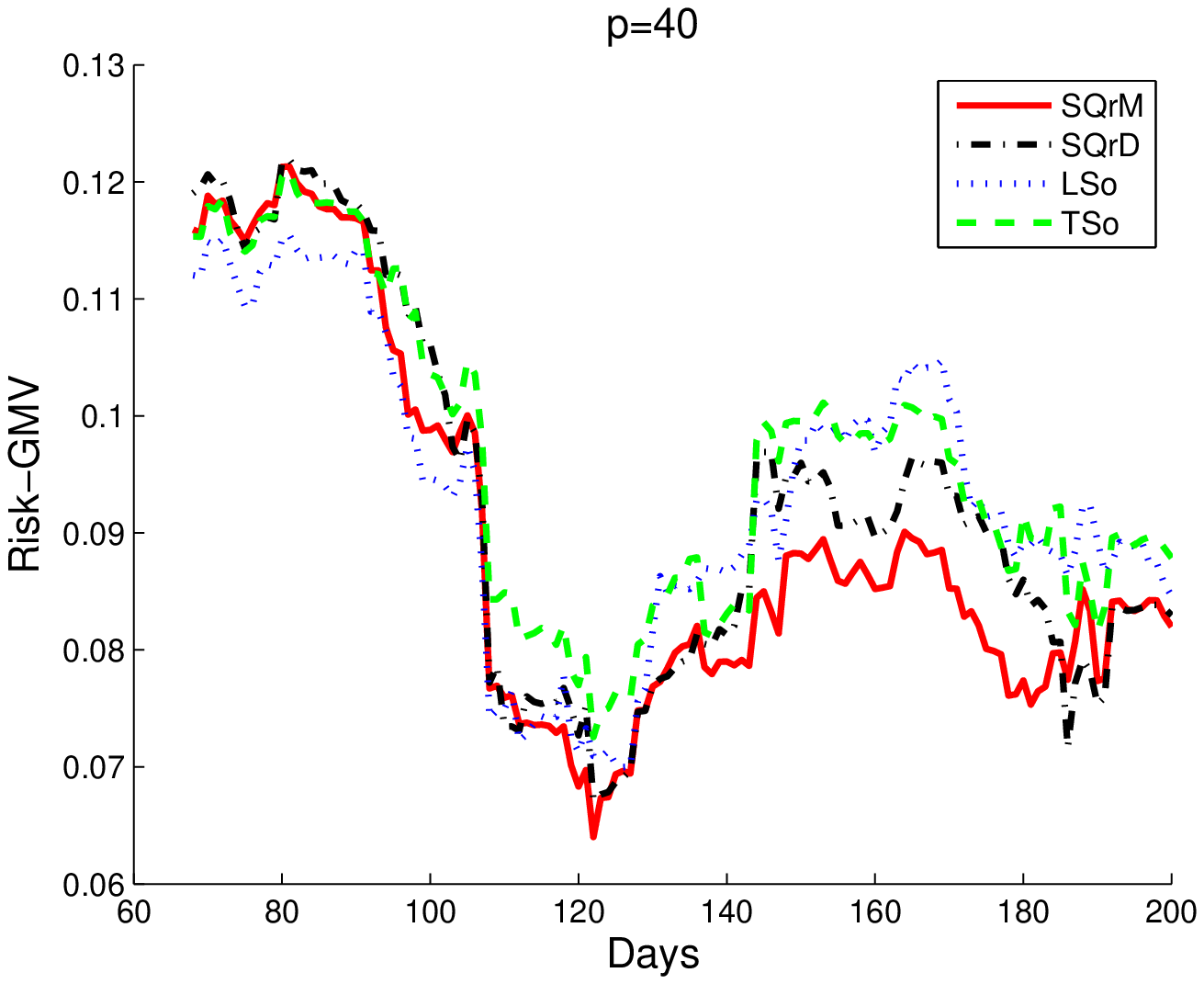} %
\includegraphics[width=7.5cm]{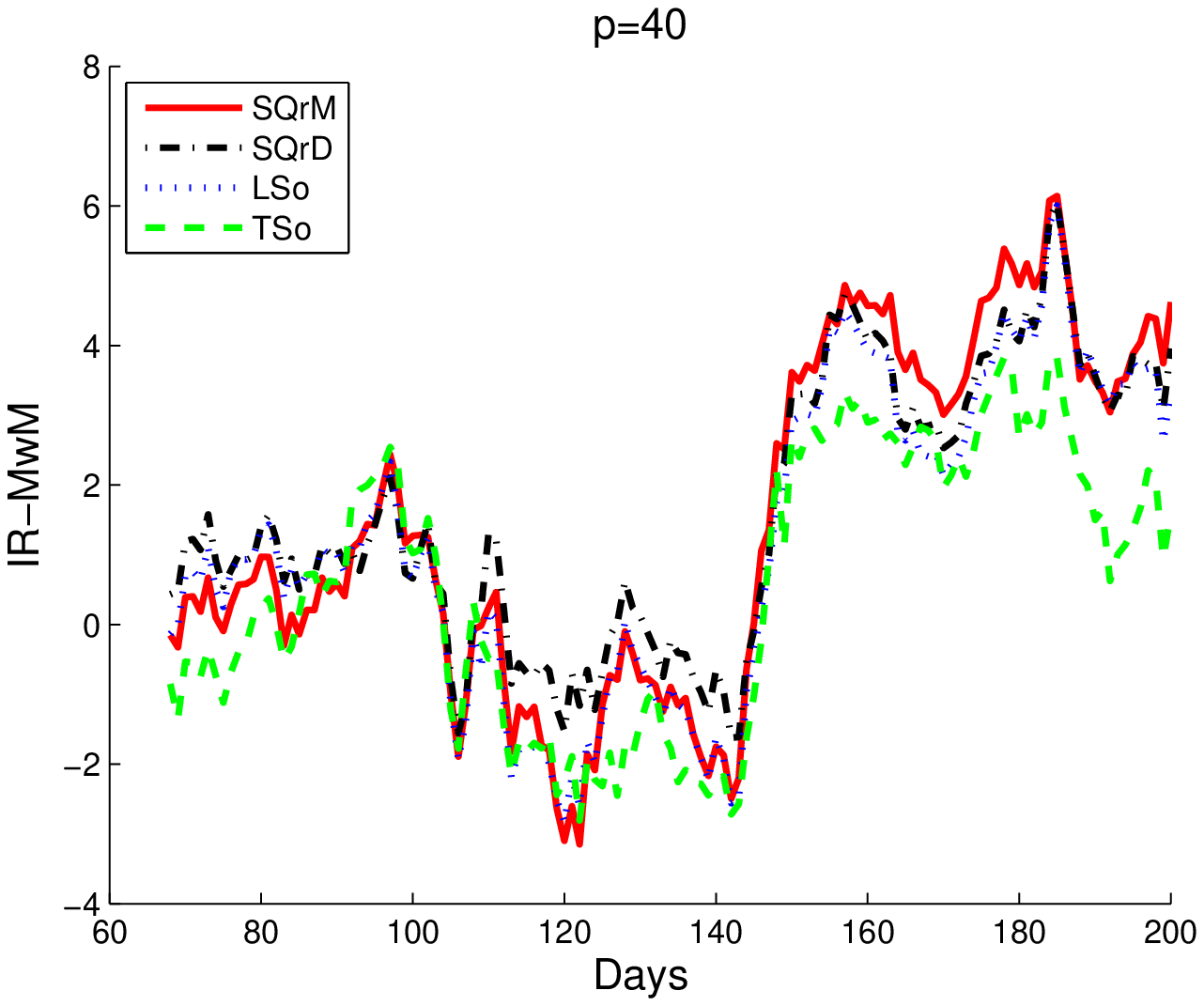} %
\includegraphics[width=7.5cm]{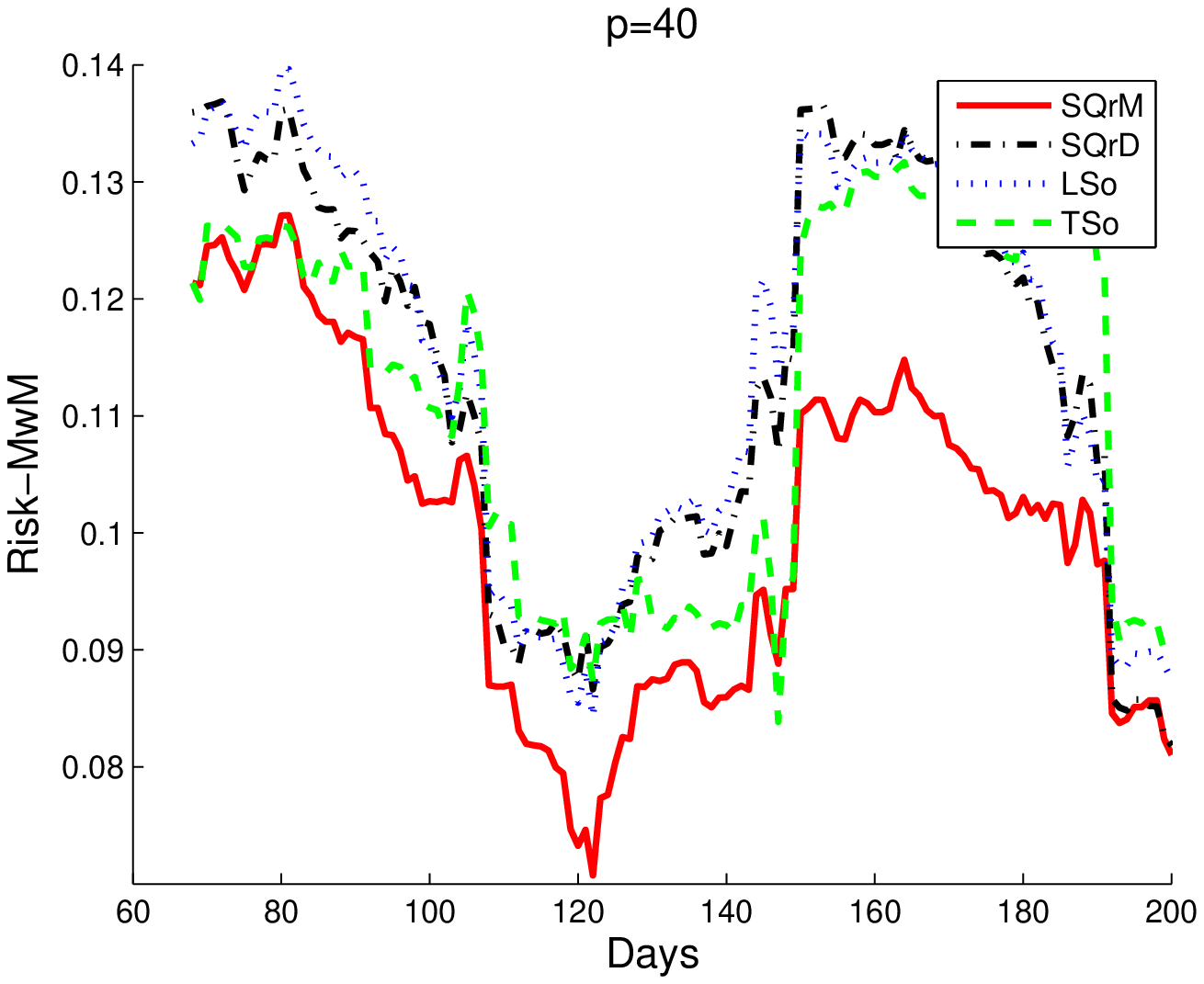}
\par
\begin{flushleft}
{\footnotesize {Note: Rolling windows of annualized standard deviations and
information ratios of log-returns for the GMV and MwM portfolios. Each point
is the standard deviation or information ratio of 42 log-returns of each
portfolio strategy. Move one trading day forward at one time such that there
are 133 different investment periods and each period contains 42 days (two
months). The upper plots correspond to: SQrM uses 1 day of all intra-day
data and 17 days of 15-minute data; SQrD uses 1 days of all intra-day data
and 110 days of daily data; LSo uses 250 daily data; TSo uses 10 days of
intra-day data. The bottom plots correspond to: SQrM use 4 days of all
intra-day data and 15 days of 15-minute data; SQrD uses 5 days of all
intra-day data and 200 days of daily data; LSo and SPo use 250 and 190 days
of daily data, respectively.} }
\end{flushleft}
\end{figure}

\begin{figure}[tbp]
\caption{Information ratios and standard deviations of log-returns of four
strategies based on rolling window of historical data for the GMV and MwM
portfolios when $p=50$. }\centering
\includegraphics[width=7.5cm]{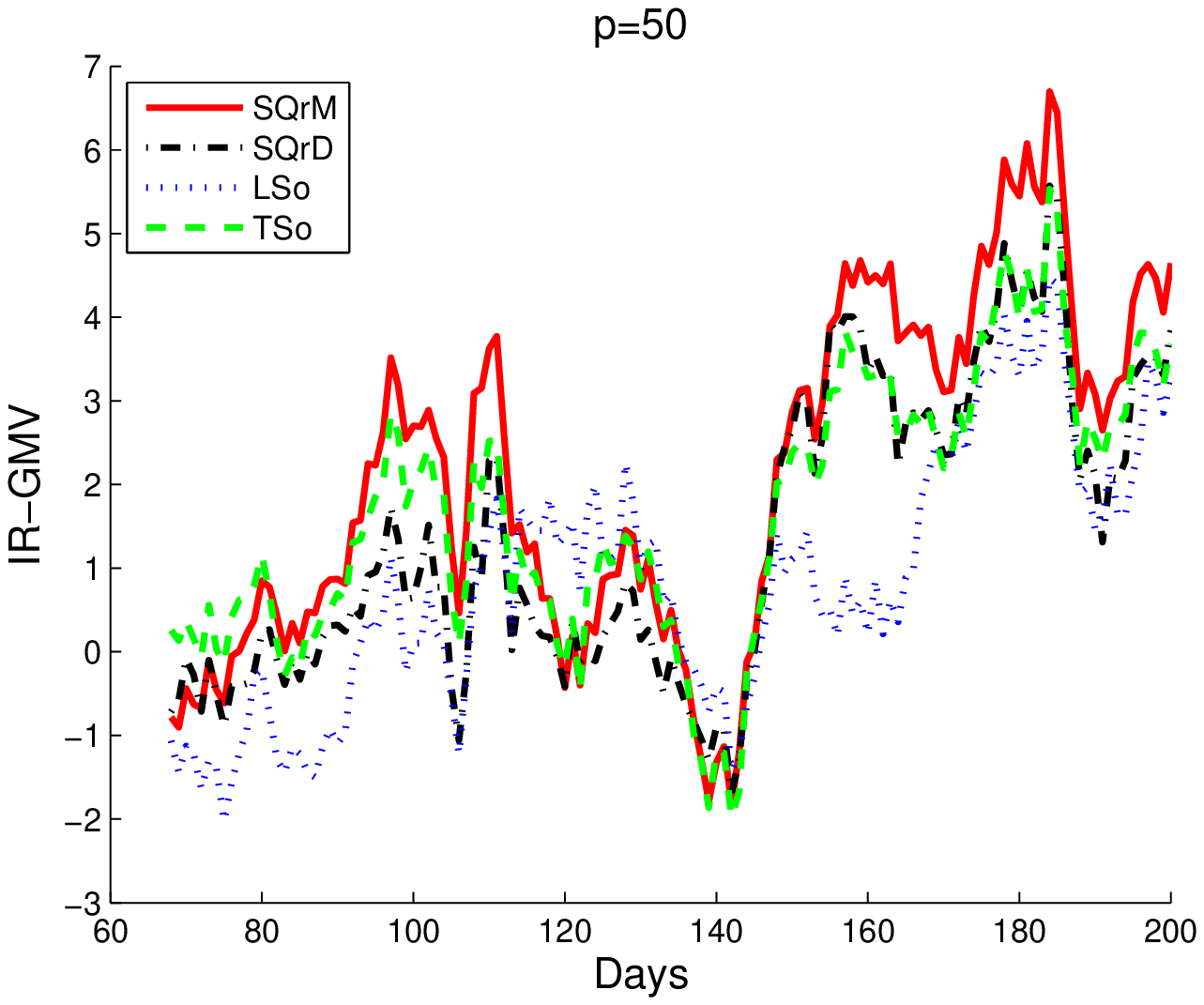} %
\includegraphics[width=7.5cm]{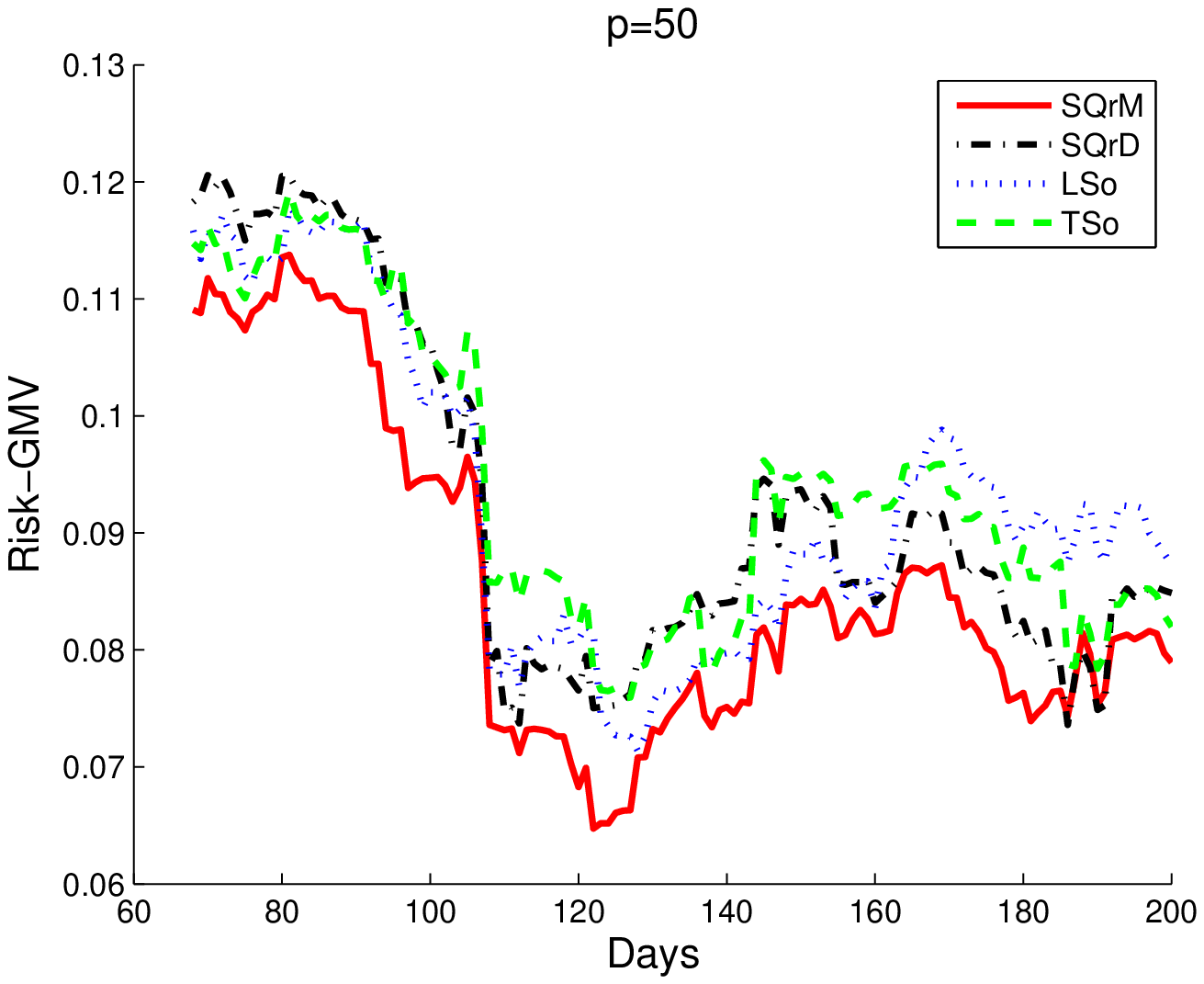} %
\includegraphics[width=7.5cm]{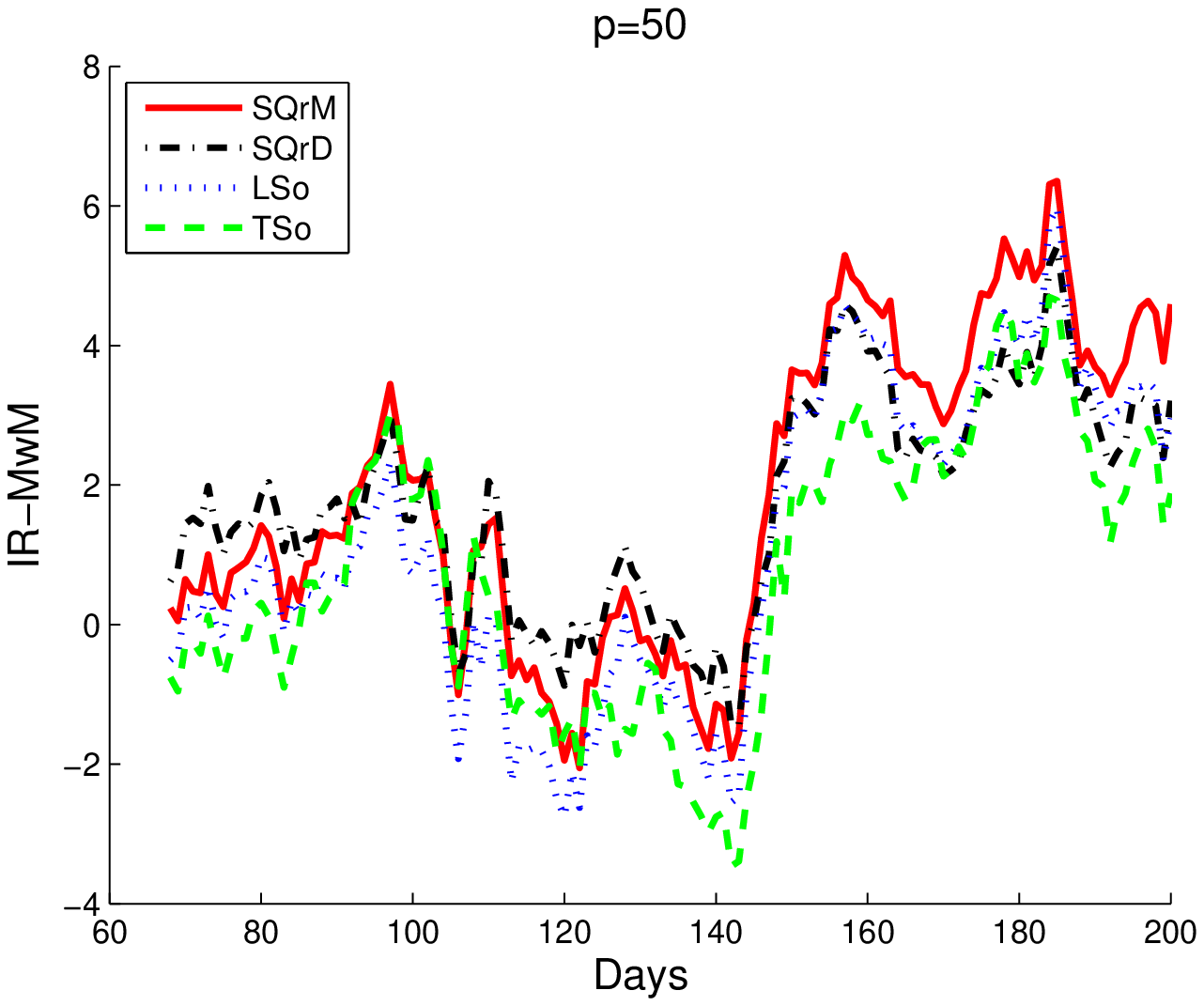} %
\includegraphics[width=7.5cm]{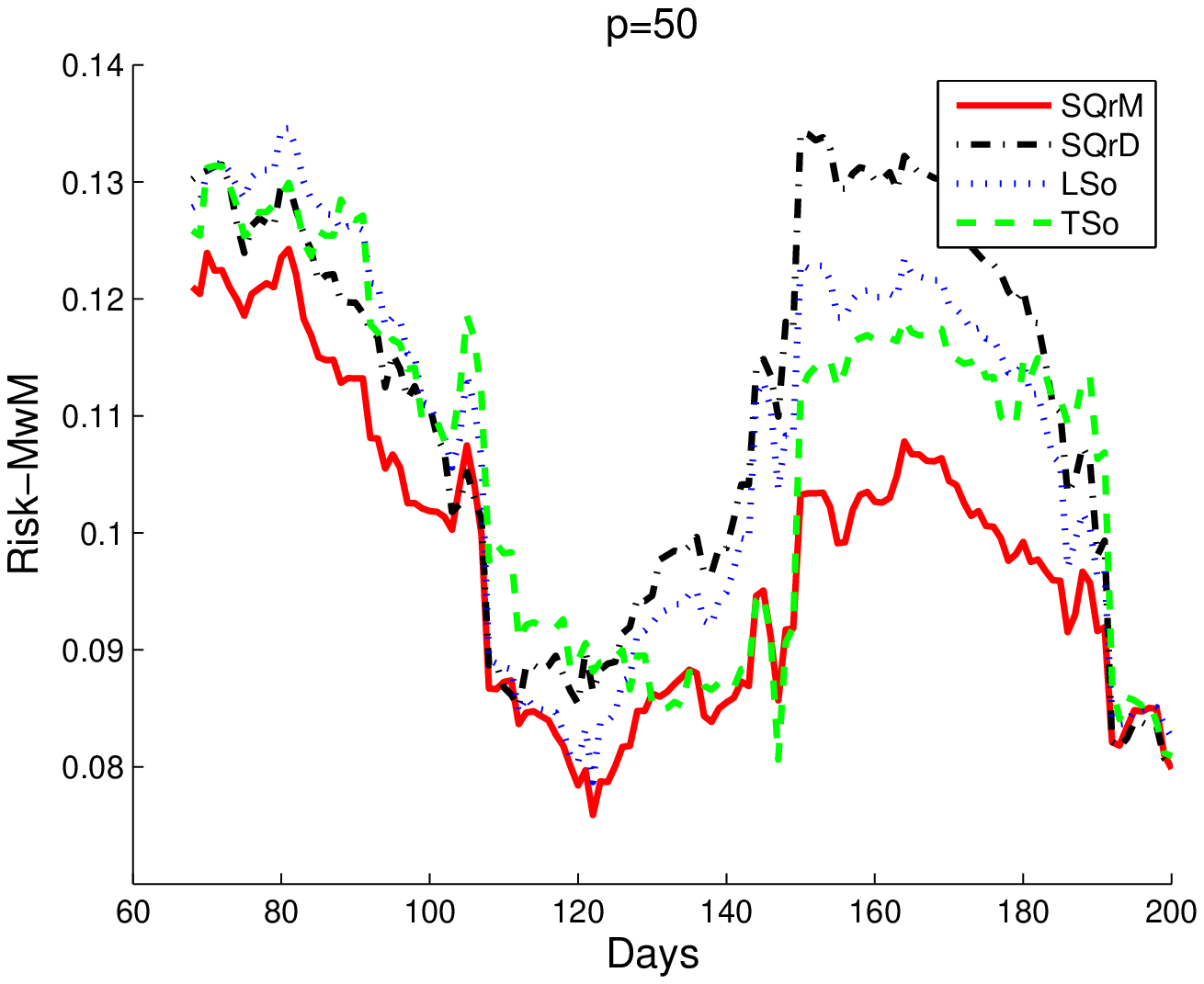}
\par
\begin{flushleft}
{\footnotesize {Note: Rolling windows of annualized standard deviations and
information ratios of log-returns for the GMV and MwM portfolios. Each point
is the standard deviation or the information ratio of 42 log-returns of each
portfolio strategy. Move one trading day forward at one time such that there
are 133 different investment periods and each period contains 42 days (two
months). The upper plots correspond to: SQrM uses 1 day of all intra-day
data and 15 days of 15-minute data; SQrD uses 1 days of all intra-day data
and 130 days of daily data; LSo uses 250 daily data; TSo uses 9 days of
intra-day data. The bottom plots correspond to: SQrM use 4 days of all
intra-day data and 13 days of 15-minute data; SQrD uses 5 days of all
intra-day data and 90 days of daily data; LSo and SPo use 250 and 130 days
of daily data, respectively. }}
\end{flushleft}
\end{figure}

\begin{figure}[tbp]
\caption{Standard deviation of log-returns of SQrD and LS in 174 investment
days for the GMV when $p=30$ and $p=50$.}\centering
\par
\includegraphics[width=7.5cm]{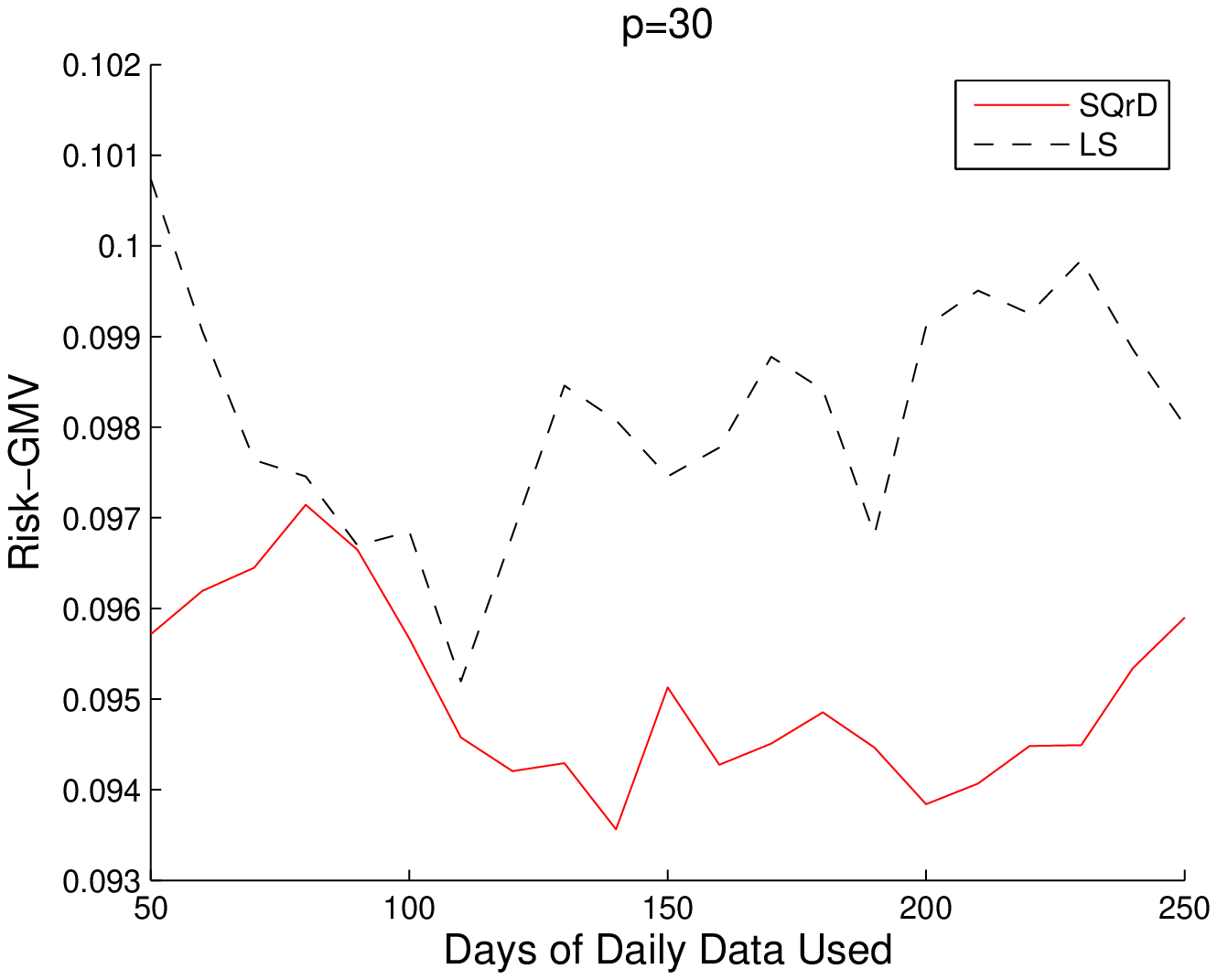} \includegraphics[width=7.5cm]{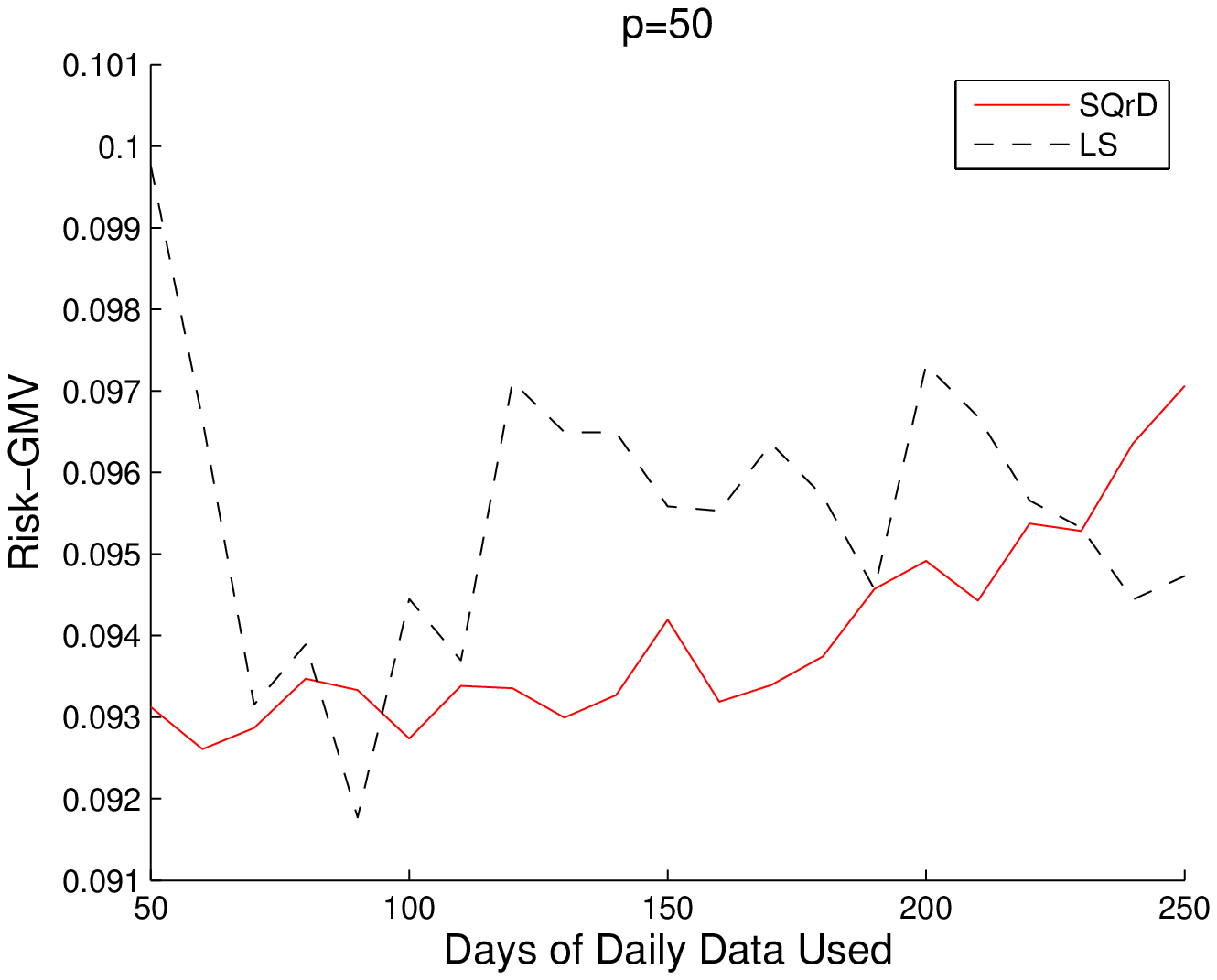}
\par
\begin{flushleft}
{\footnotesize {Note: Each point is the standard deviation of 174
log-returns of each portfolio strategy. } }
\end{flushleft}
\end{figure}

\end{document}